\documentclass{ps}

\usepackage{amssymb,amsbsy,amsmath,amscd,amsthm,amsfonts}
\usepackage{cite}
\usepackage[utf8]{inputenc}
\usepackage{enumerate,hyperref}
\usepackage{dsfont}

\hypersetup{colorlinks=true, citecolor=blue % false: boxed links; true: colored links
}

\newtheorem{theorem}{Theorem}
\newtheorem{condition}{Assumption}

\newtheorem{lemma}{Lemma}
\newtheorem{remark}{Remark}

\newtheorem{corollary}{Corollary}

\newcommand{\R}{\mathbb R}
\newcommand{\PP}{\mathbb P}
\newcommand{\E}{\mathbb E}
\newcommand{\VV}{\mathbb V}
\newcommand*\diff{\mathop{}\!\mathrm{d}}

\newcommand{\DS}{\displaystyle}

\begin{document}

\title{A change-point problem and inference for segment signals}%\thanks{...}\thanks{...}% At most 5 thanks

\author{Victor-Emmanuel Brunel}\address{Massachusetts Institute of Technology, Department of Mathematics}

%
%\date{...}
%
\begin{abstract}
  We address the problem of detection and estimation of one or two change-points in the mean of a series of random variables. We use the formalism of set estimation in regression: To each point of a design is attached a binary label that indicates whether that point belongs to an unknown segment and this label is contaminated with noise. The endpoints of the unknown segment are the change-points. We study the minimal size of the segment which allows statistical detection in different scenarios, including when the endpoints are separated from the boundary of the domain of the design, or when they are separated from one another. We compare this minimal size with the minimax rates of convergence for estimation of the segment under the same scenarios. The aim of this extensive study of a simple yet fundamental version of the change-point problem is twofold: Understanding the impact of the location and the separation of the change points on detection and estimation and bringing insights about the estimation and detection of convex bodies in higher dimensions.
\end{abstract}

\begin{resume} On s'intéresse aux problèmes de détection et d'estimation d'un ou de deux points de rupture dans une suite de variables aléatoires. Le formalisme utilisé est celui de l'estimation d'ensembles dans le cadre de la regression: chaque observation est accompagnée d'une variable binaire indiquant si l'observation est à l'intérieur d'un segment inconnu, et cette variable est observée avec un bruit additionnel. Les extremités du segment correspondent aux points de rupture. On caractérise la taille minimale du segment permettant sa détection, dans différents scénarios: en particulier, lorsque ses extrémités sont suffisamment eloignées du bord du domaine, l'une de l'autre. Ensuite, on compare cette taille minimale aux taux minimax de convergence pour l'estimation du segment, dans ces mêmes scénarios. L'objectif de l'étude exhaustive de cette version élémentaire mais fondamentale du problème de rupture est double: on cherche à comprendre l'impact de la localisation des points de rupture sur leur detection et leur estimation, ainsi qu'à élaborer de nouvelles idées pour l'estimation et la détection d'ensembles convexes en plus grande dimension.

\end{resume}
\subjclass{62F10,60G55}
\keywords{change-point, detection, hypothesis testing, minimax, separation rate, set estimation}

\maketitle
%%-----------------------------
%%      your text
%%-----------------------------

\section{Introduction}
	
\subsection{The change-point problem}

Change-point problems have been studied extensively, especially in time series analysis, where the goal is to detect or estimate breakpoints in the distribution of ordered observations. The breakpoints can occur in the mean \cite{KTlectureNotes1993,Korostelev1986,Raimondo1998,ParkKim2004,Ninomiya2005,Wu2005,Betken2016} and/or in the variance \cite{ChenGupta2004,SonKim2005}, in a location parameter \cite{AntochHuvskova2000}, in the tail of the distribution \cite{GadeikisPaulauskas2005}, in a general parameter of the distribution \cite{LeeHaNa2003}, or the whole distribution can change at the breakpoints \cite{BorovkovLinke2004}. For more details on the account of change-point problems in time series analysis, we refer to (see \cite{ShaoZhang2010, Betken2016} and the references therein. Here, we focus on change-points in ordered data that do not necessarily come from time series. Then, if the change-points occur in the mean, this problem can be stated in terms of inference on breakpoints of a regression function. A general problem has been addressed in \cite{FrickMunkSieling2013} and includes cases where breakpoints occur in the mean: A sample $Y_1,\ldots,Y_n$ is observed, where $Y_i$ has a density $f(\cdot,\phi(\frac{i}{n}))$ with respect to a given measure, for $i=1,\ldots,n$. The map $f$ belongs to some parametric class of densities. The real valued function $\phi$ is assumed to be piecewise constant on $[0,1]$, with a finite number $K$ of jumps (called the change points, or breakpoints), where $K$ is not necessarily known. It is shown that at least one change-point can be detected consistently. When $f(\cdot,\mu)$ is the Gaussian density with mean $\mu$ and given variance $\sigma^2>0$, the problem was addressed in \cite{Lebarbier2003}. That problem can also be interpreted as estimating the jumps of a regression function. Its simplest form, with only one change-point, reads as follows:

\begin{equation}
	\label{ChangePoint}
	Y_i=\mathds 1(X_i\leq \theta)+\xi_i, i=1,\ldots,n,
\end{equation}
where $X_1,\ldots,X_n$ are given numbers (possibly random) in $[0,1]$, $\xi_1,\ldots,\xi_n$ are i.i.d. random variables independent of the $X_i$'s and $\theta\in [0,1]$ is the change-point (or breakpoint).

Model \eqref{ChangePoint} was studied in \cite[Sec. 1.9]{KTlectureNotes1993} and a continuous-time version was addressed in \cite{Korostelev1986}, where the aim is to estimate the breakpoint $\theta$. 
In the continuous-time version, Korostelev \cite{Korostelev1986} focused on a more general framework, where the regression function has a jump but is not necessarily an indicator function. In \cite{Korostelev1986,KTlectureNotes1993}, the change-point $\theta$ is estimated with an expected accuracy of order $1/n$. Ibragimov and Khasminskii \cite{Ibragimov1975,Ibragimov1984} defined a consistent estimator of the discontinuity point of a regression function, with rate $1/n$ as well. However, a key assumption in all these works is that the change-point is separated from the boundaries of the domain: $h\leq \theta \leq 1-h$, for some $h\in (0,1/2)$. %No separation is assumed in \cite{Lebarbier2003}, but the rate is deteriorated by a logarithmic factor, which is partly due to the fact that the number of breakpoints is unknown. 

The separation assumption from the boundaries of the domain is also made in higher dimensional problems. For instance, in \cite{KorostelevTsybakov1992} and \cite[Chap 3]{KTlectureNotes1993}, a boundary fragment (which plays the higher dimensional role of $[0,\theta]$ in \eqref{ChangePoint}) is estimated, under the assumption that its edge function is uniformly separated from $0$ and $1$.

As part of this work, we prove that this separation assumption is only technical and that the estimation rate $1/n$ (with no extra logarithmic factor) is achieved without separation in Model \eqref{ChangePoint}. Our focus is a one dimensional model, where the number of breakpoints is known and is either one or two, which allows an interpretation of the model in terms of detection and estimation of segments.

More generally, we consider the following statistical model: 
	\begin{equation}
		\label{Model}
		Y_i=\mathds 1(X_i\in G)+\xi_i, i=1,\ldots,n,
	\end{equation}
	where $\mathds 1(\cdot)$ is the indicator function.
	The collection $\mathcal X=\{X_1,\ldots,X_n\}$ is called the design and it is observed, as well as the labels $Y_1,\ldots,Y_n$.
	The unknown set $G$ is a segment on $[0,1]$ and the noise terms $\xi_i$ are unobserved i.i.d. random variables, independent of the design. 

Throughout the paper, we assume that the noise terms $\xi_1,\ldots,\xi_n$ from Model \eqref{Model} are subgaussian, i.e., satisfy 
\begin{equation}\label{subgauss}
		\E\left[e^{u\xi_i}\right]\leq e^{\sigma^2 u^2/2}, \quad \forall u\in\R, \quad i=1,\ldots,n,
\end{equation}
	for some positive constant $\sigma>0$. This constant need not be known. Note that \eqref{subgauss} implies that the noise terms have mean zero. If they are centered Gaussian random variables, then they satisfy \eqref{subgauss} with $\sigma^2=\textsf{Var}(\xi_1)$. 
	
  Since the design and the noise are assumed to be independent, reordering the $X_i$'s does not affect the model. Indeed, there exists a reordering $\{i_1,\ldots,i_n\}$ of $\{1,\ldots,n\}$, such that $X_{i_1}\leq\ldots\leq X_{i_n}$. The random indices $i_1,\ldots,i_n$ are independent of the noise, therefore, the new noise vector $(\xi_{i_1},\ldots, \xi_{i_n})$ has the same distribution as $(\xi_1,\ldots, \xi_n)$. Hence, we assume from now on that $\mathcal X$ is the reordering of a preliminary design, i.e., $X_1\leq\ldots\leq X_n$ almost surely, without loss of generality. We distinguish two types of designs:
	\begin{enumerate}
		\item[(DD)] Deterministic, regular design: $X_i=i/n, i=1,\ldots,n$;
		\item[(RD)] Random, uniform design: the variables $X_i, i=1,\ldots,n$, are the reordering of i.i.d. uniform random variables in $[0,1]$.
	\end{enumerate}
Other designs are considered in the literature (e.g., see\cite[Sec. 1.9]{KTlectureNotes1993}) but we prefer to restrict ourselves to the designs (DD) and (RD), which yield straightforward extensions to other types of designs.
	Note that Model \eqref{Model} can also be interpreted as a nonlinear regression, where we do inference on the support $G$ of the regression function. Actually, it is the one dimensional version of the model studied in \cite{Brunel2013}, where $G$ plays the role of a $1$-dimensional convex body. In \cite{Brunel2013}, it is explained that hardness of estimation of $G$, in Model \eqref{Model}, can be explained by two factors. The first factor is the complexity of the class of possible candidates $G$ and the second one is that $G$ might be too small to be detected by any statistical procedure. In the present case, $G$ is a segment, hence the corresponding class is parametric and one can hope to estimate $G$ at the fast speed $1/n$, up to a positive multiplicative constant. However, we show that surprisingly, if the class contains arbitrarily short segments, the speed of estimation may be deteriorated. We try to understand what type of scenarios allow estimation of $G$ at the fast rate $1/n$ and under what other scenarios the best rate of estimation of $G$ is significantly worse than $1/n$, in a minimax sense. In particular, we study the two following assumptions on $G$, where $\mu\in (0,1)$ is fixed throughout the paper.
	
\begin{condition}\label{A1}
	$G=[0,\theta]$, for some unknown number $0\leq \theta \leq 1$.
\end{condition}

This assumption carries information about the location of $G$. In terms of the change-point problem, it implies that there is only one change-point and the problem becomes equivalent to \eqref{ChangePoint}, where no separation of $\theta$ from $0$ and $1$ is imposed. 

The second assumption implies that $G$ is not too short, i.e., that the two change-points are separated from one another:

\begin{condition}\label{A2}
	$|G|\geq \mu$, where $\mu\in(0,1)$ is a known positive number.
\end{condition}	

Here, $|G|$ is the length of the segment $G$.

Model \eqref{Model} with random design has been addressed in higher dimensions in \cite{Brunel2013}, where $G$ is assumed to be a convex polytope with fixed number of vertices. In that work, there were no assumptions of the type of \ref{A1} or \ref{A2} and the rate of estimation was $(\ln n)/n$, which, we believe, would become faster under similar assumptions as Assumptions \ref{A1} or \ref{A2}. Further details on this account are given in the discussion in Section \ref{ConclusionDiscussion}.

%In \cite{Gayraud2001}, a testing problem is addressed where the set of interest, as in \cite{KorostelevTsybakov1992}, is the support of a uniform density.  This set is again assumed to be a boundary fragment (see \eqref{BF}). Of course, detection is not pertinent in that model, since, as long as there are observations, there must be a nonempty support. Gayraud studied the separation rates for distinguishing hypotheses of the type $H_0 : G=G_0$ and $H_1 : d(G,G_0)\geq h$, where $G_0$ is a given boundary fragment, $d$ is the Nykodim distance between sets, defined below, and $h$ is a given positive number which may depend on $n$. This work sets the testing framework which we use in this paper, and an \textit{optimal} choice of $h$ will correspond to what we call a \textit{separation rate}.

The detection problem consists of testing whether $G=\emptyset$ in Model \eqref{Model}, i.e., whether there is no change-point. It is addressed, for example, in \cite{ChanWalther2013}, where, unlike here, the authors do not assume the strength of the signal to be known:
	\begin{equation}
		\label{Model2}
		Y_i=\delta\mathds 1(X_i\in G)+\xi_i, i=1,\ldots,n,
	\end{equation}
where $\delta$ is an unknown positive number. For the signal to be detectable, there should be a tradeoff between its length $|G|$ and its strength $\delta$. Intuitively, if $\delta$ is small, then the set $G$ should be big enough and conversely, if $\delta$ is large, the set $G$ can be short and the signal still be detected. In that framework, testing the presence of a signal reduces to decide whether $\delta=0$, which makes the problem different from ours, where we know $\delta$ and impose $\delta=1$. In \cite{ChanWalther2013}, the authors mainly study the power of two tests under design (DD): the scan - or maximum - likelihood ratio and the average likelihood ratio. These are compared in two regimes: signals of small scales, i.e. $|G|\longrightarrow 0$, when $n\rightarrow\infty$ and signals of large scales, i.e. $\displaystyle{\underset{n\rightarrow\infty}{\operatorname{liminf}} \text{ } |G|>0}$. It is proved that if $\displaystyle{\delta\sqrt{n|G|}\geq\sqrt{2\ln\frac{1}{|G|}}+b_n}$, for some sequence $b_n$ such that $b_n\longrightarrow \infty$, then there is a test with asymptotic power 1. For fixed $\delta$, Chan and Walther's condition implies that $|G|$ must be of order at least $(\ln n)/n$. We prove a similar condition when $\delta=1$ is known. This means that knowing $\delta$ does not make the detection problem easier and the logarithmic factor in Chan and Walther's condition is not due to adaptation to $\delta$.

 Note that $\delta\sqrt{|G|}$ is exactly the $L^2$-norm of the signal. In \cite{LepskiTsybakov2000}, signals of unknown shape but known smoothness were considered. The authors test whether the observations are pure noise and give exact minimax separation rates in terms of the $L^2$-norm of the signal. Detection is harder in that framework, because unlike in Model \eqref{Model} or \eqref{Model2}, the shape of the signal is unknown and only its smoothness is known. This is why the separation rates are larger than those corresponding to models \eqref{Model} and \eqref{Model2}, in the sense that they allow less freedom for the size of the signal to be detected. A similar detection problem has also been studied in \cite{CaiYuan2014}, where three cases are considered: either the shape of the signal (up to an affine transform), or its smoothness is known, or nothing is known. However, this problem is different from ours, since we are only concerned with the location of the signal, not the signal itself. This is why, and also for the sake of simplicity, we only deal with signals of known shape and amplitude in the present work.

%As we mentioned above, the problem of estimating the unique discontinuity point of a function, when observed directly up to some noise, was studied in \cite{Korostelev1986}. When, instead of the function itself, its image by some given operator is observed, estimation of its unique discontinuity point was studied in \cite{GoldenshlugerTsybakovZeevi2006} and, when the function may have more than one discontinuity point of certain order, the problem was addressed in \cite{FrickHohageMunk2013}.  

%In both models \eqref{Model} and \eqref{Model2}, the exact shape of the signal is known: it is piecewise constant. In \cite{LepskiTsybakov2000}, a continuous time version of these models is considered, with a signal of unknown shape, but known smoothness:
%	\begin{equation*}
%		dY(t)=f(t)dt+\epsilon dW(t), 0\leq t\leq 1,
%	\end{equation}
%where $W$ is the standard Wiener process on $[0,1]$, $f$ is an unknown real-valued function, defined on $[0,1]$, and $0<\epsilon<1$. The function $f$ is assumed to belong to a Hölder or Sobolev class, with known smoothness parameters, and one wants to test whether a signal is present, i.e. $f\noteq 0$ or not. Separation rates are given, as a function of the smoothness parameter.

\subsection{Definitions and notation}

	If $G$ is a segment in $[0,1]$, we denote by $\PP_G$ the joint probability measure of the observations $(X_1,Y_1),\ldots,(X_n,Y_n)$ that satisfy Model \eqref{Model} and by $\E_G$ and $\VV_G$ the corresponding expectation and variance operators. We may omit the subscript $G$ and write only $\PP$, $\E$ or $\VV$ if there is no ambiguity. 
	
	If $G_1$ and $G_2$ are two segments in $[0,1]$, we denote by $G_1\triangle G_2$ their symmetric difference. Its Lebesgue measure $|G_1\triangle G_2|$ is also called the Nykodim distance between $G_1$ and $G_2$.

	An estimator of $G$ is a segment (possibly empty) $\hat G_n$ of $[0,1]$, whose construction depends on the observations.
We measure the accuracy of an estimator $\hat G_n$ in a minimax framework. The risk of $\hat G_n$ on a class $\mathcal C$ of segments is defined as
\begin{equation*}
	\mathcal R_n(\hat G_n ; \mathcal C) = \sup_{G\in\mathcal C}\mathbb E_G[|G\triangle\hat G_n|].\tag{$*$}
\end{equation*}
The rate (a sequence which depends on $n$) of an estimator on a class $\mathcal C$ is the speed at which its risk converges to zero when the number $n$ of available observations tends to infinity. The minimax risk on a class $\mathcal C$, when $n$ observations are available, is defined as
\begin{equation*}
	\mathcal R_n(\mathcal C)=\inf_{\tilde G_n} \mathcal R_n(\hat G_n ; \mathcal C), \tag{$**$}
\end{equation*}
where the infimum is taken over all estimators $\tilde G_n$ depending on $n$ observations. If $\mathcal R_n(\mathcal C)$ converges to zero, we call the minimax rate of convergence on the class $\mathcal C$ the speed at which $\mathcal R_n(\mathcal C)$ tends to zero.

%It is interesting to provide a lower bound for $\mathcal R_n(\mathcal C)$. By definition, no estimator can achieve a better rate on $\mathcal C$ than that of the lower bound. This bound also informs on how close a given estimator is to optimality in a minimax sense. If the rate of the upper bound on the risk of an estimator matches the rate of the lower bound on the minimax risk on the class $\mathcal C$, then the estimator is said to have the minimax rate of convergence, or to be rate-optimal in the minimax sense - up to constant factors - on this class.

%Hypothesis testing is a statistical procedure that aims to help decide whether to reject or not a given hypothesis, called the null hypothesis, that is opposed to an alternative one. In this paper, we will consider the following null hypothesis:
For the detection problem, let $h$ be a positive number, that may depend on $n$. We consider the following hypotheses:
$$
\begin{cases}
	H_0 : G=\emptyset \mbox{ (the null hypothesis)}\\
	H_1 : |G|\geq h \mbox{ (the alternative hypothesis)}.
\end{cases}
$$
The performance of a test $\tau_n\in\{0,1\}$ on a class $\mathcal C$ is measured by the sum of its type one and two errors, i.e.,  
$$\gamma_n(\tau_n,\mathcal C)=\PP_\emptyset\left[\tau_n=1\right]+\sup_{G\in\mathcal C,|G|\geq h}\PP_G\left[\tau_n=0\right].$$
We say that $\tau_n$ is consistent on the class $\mathcal C$ if and only if $\gamma_n(\tau_n,\mathcal C)\longrightarrow 0$ when $n\rightarrow\infty$. We call the separation rate on the class $\mathcal C$ any sequence of positive numbers $r_n$ such that:
\begin{itemize}
	\item if $\displaystyle{\frac{h}{r_n} \underset{n\rightarrow\infty}{\longrightarrow}\infty}$, then there exists a consistent test on $\mathcal C$ and
	\item if $\displaystyle{\frac{h}{r_n} \underset{n\rightarrow\infty}{\longrightarrow} 0}$, then no test is consistent test on $\mathcal C$.
\end{itemize}

We define three different classes of segments: 
\begin{description}
	\item[- ] $\mathcal S=\left\{[a,b] : 0\leq a\leq b\leq 1\right\}$ is the class of all segments on $[0,1]$,
	\item[- ] $\mathcal S_0=\left\{[0,\theta] : 0\leq \theta\leq 1\right\}$ is the class of all segments on $[0,1]$, satisfying Assumption \ref{A1},
	\item[- ] $\mathcal S(\mu)=\left\{G\in\mathcal S : |G|\geq \mu\right\}$ is the class of all segments on $[0,1]$, satisfying Assumption \ref{A2}.
\end{description}

For two real valued sequences $A_n$ and $B_n$ and a parameter $\vartheta$, which may be multidimensional, we will write $A_n\asymp_\vartheta B_n$ when there exist positive constants $c(\vartheta)$ and $C(\vartheta)$, which depend on $\vartheta$ only, such that $c(\vartheta)B_n\leq A_n\leq C(\vartheta)B_n$, for $n$ large enough. If we put no subscript under the sign $\asymp$, this means that the corresponding constants are universal, i.e., do not depend on any parameter.

\subsection{Outline and Contributions}

In Section \ref{Sec:Detection}, we tackle the detection problem. We prove that the separation rates are at least $1/n$ for the class $\mathcal S_0$ and $(\ln n)/n$ for $\mathcal S$ and that these are exactly the separation rates when the noise is Gaussian (Theorem \ref{TestThm}). In Section \ref{Sec:Estimation}, we estimate the unknown set and we prove minimax rates of on the classes $\mathcal S$, $\mathcal S_0$ and $\mathcal S(\mu)$. Specifically, we show that the breakpoint $\theta$ in \eqref{ChangePoint} can be estimated at the speed $1/n$ without imposing separation from 0 and 1: We prove that the minimax rate of convergence on the class $\mathcal S_0$ is $1/n$ (Theorem \ref{minimaxCP}). We also show that recovering a general segment (or, equivalently, two change-points) with a uniform risk of order $1/n$ is possible only if it is known \textit{a priori} that the unknown set is not too small (Theorem \ref{minimaxriskgen}). Otherwise, a logarithmic factor appears in the minimax rate (Theorem \ref{minimaxlarge}). For $G\in\mathcal S(\mu)$, we define a two step estimator inspired by \cite{Korostelev1986}. We first locate $G$ with high probability using the first half of the sample and we apply the same ideas as in the class $\mathcal S_0$ to estimate the endpoints of $G$ separately, using the second half of the sample.

In Section \ref{ConclusionDiscussion} we draw conclusions, followed by a discussion about possible extensions and all the proofs are deferred to Section \ref{Sec:Proofs}.

\section{Detection of a set} \label{Sec:Detection}

	In this section, we find separation rates for testing $H_0 : G=\emptyset$ against $H_1 : |G|\geq h$, when $G$ belongs to some classes of segments.

The idea for the class $\mathcal S_0$ is the following. Under $H_1$, since $G\in\mathcal S_0$, we necessarily have that $[0,h]\subseteq G$. Therefore, we check among those pairs $(X_i,Y_i)$ for which $X_i\leq h$ if there are sufficiently many $Y_i$'s that are large, e.g. larger than $1/2$. Let $N=\max\{i=1,\ldots,n : X_i\leq h\}=\#\left(\mathcal X\cap [0,h]\right)$, where $\#$ stands for cardinality. Let $S$ be the following test statistic:
	\begin{equation*}
		S=\#\{i=1,\ldots,N : Y_i\leq \frac{1}{2}\}.
	\end{equation*}
If the alternative hypothesis is true, i.e. if $|G|\geq h$, then all the $X_i$'s, $i\leq N$, fall inside $G$ and the corresponding $Y_i$'s should not be too small. We define the test $\DS T_n^0=\mathds 1(S\leq cN)$, where $c$ is any number strictly between $\PP[\xi_1\leq -1/2])$ and $\PP[\xi_1\leq 1/2]$. Note that this definition requires some knowledge about the noise distribution. In many cases, it may be reasonable to assume that the noise is symmetric, hence to take $c=1/2$. However, in general, it is not clear how to calibrate $c$ if no information about the noise is available.

For the class $\mathcal S$, we consider a pseudo likelihood ratio test (which is a likelihood ratio test if the noise is Gaussian). For $G\in\mathcal S$, let $R(G)=\sum_{i=1}^n Y_i\mathds 1(X_i\in G)-\frac{\#(\mathcal X\cap G)}{2}$, and let $R=\sup_{|G|\geq h}R(G)$. Under the alternative hypothesis, $R$ should be quite large, hence, we define the test $T_n^1=\mathds 1(R\geq 0)$. Note that this test reduces to scanning the interval $[0,1]$ and seeking for a large enough quantity of successive observations with large $Y_i$, which can be done in a quadratic number (in $n$) of steps.

	\begin{theorem} \label{TestThm}
		Let Model \eqref{Model} hold. 
		\begin{enumerate} 
			\item Assume that the design is (DD) or (RD) and that the noise satisfies:
						$$\PP[\xi_1\leq -1/2] < \PP[\xi_1\leq 1/2].$$ 
						Then, if $nh\longrightarrow\infty$, the test $T_n^0$ is consistent, i.e. $\gamma_n(T_n^0,\mathcal S_0)\longrightarrow 0$. In addition, if the noise is Gaussian, then $r_n=1/n$ is a separation rate on the class $\mathcal S_0$.
			\item Assume that the design is (DD) or (RD). Then, if $nh/\ln n\longrightarrow \infty$, the test $T_n^1$ is consistent, i.e. $\gamma_n(T_n^1,\mathcal S)\longrightarrow 0$. In addition, if the noise is Gaussian, then $r_n=(\ln n)/n$ is a separation rate on the class $\mathcal S$.
		\end{enumerate}
	\end{theorem}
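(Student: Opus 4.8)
The statement has four components: for each of the two tests I must establish consistency, i.e.\ an upper bound on $\gamma_n$ under the stated growth condition on $nh$, and, in the Gaussian case, a matching impossibility result that identifies the separation rate. My plan is to obtain the consistency claims by direct tail bounds on the test statistics, and the impossibility claims through the standard reduction $\inf_{\tau_n}\gamma_n(\tau_n,\mathcal C)\geq 1-\mathrm{TV}(\PP_\emptyset,\bar\PP)$, where $\bar\PP$ is a single alternative for $\mathcal S_0$ and a mixture of alternatives for $\mathcal S$, combined with a bound on a suitable divergence.

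For the class $\mathcal S_0$ and the test $T_n^0$ everything reduces to Bernoulli sums. Under $H_0$ the labels are pure noise, so conditionally on the design $S$ is a sum of $N$ independent $\mathrm{Bernoulli}(\PP[\xi_1\leq 1/2])$ variables; since $c<\PP[\xi_1\leq1/2]$, Hoeffding's inequality bounds $\PP_\emptyset[S\leq cN]$ by $\E[e^{-2(\PP[\xi_1\le1/2]-c)^2N}]$. Under $H_1$ the inclusion $[0,h]\subseteq G$ forces $X_i\in G$ for every $i\leq N$, so $Y_i=1+\xi_i$ there and $S$ becomes a sum of $N$ independent $\mathrm{Bernoulli}(\PP[\xi_1\leq-1/2])$ variables with $c>\PP[\xi_1\leq -1/2]$; Hoeffding again gives an exponentially small bound, uniformly in $\theta\geq h$. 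In both cases $N\asymp nh\to\infty$ (for (RD), $N\sim\mathrm{Bin}(n,h)$ and I would push the bound through the binomial moment generating function), which yields consistency. For the lower bound I would test $\emptyset$ against the single alternative $[0,h]$: conditionally on the design the two laws differ only on the $N$ coordinates inside $[0,h]$, each by a unit Gaussian mean shift, so $\mathrm{KL}(\PP_\emptyset\Vert\PP_{[0,h]})=\E[N]/(2\sigma^2)=nh/(2\sigma^2)$; when $nh\to0$ this tends to $0$, and Pinsker's inequality gives $\mathrm{TV}\to0$, so no test is consistent. This matches $r_n=1/n$.

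For the class $\mathcal S$ and the test $T_n^1$ I would first note that, writing $M(G)=\#(\mathcal X\cap G)$, one has $R(G)=\sum_{X_i\in G}\xi_i+\tfrac12 M(G)$ when $G$ is the true set and $R(G)=\sum_{X_i\in G}\xi_i-\tfrac12 M(G)$ under $H_0$. The type II error is easy: $R\geq R(G^\ast)$ and a subgaussian tail bound gives $\PP_{G^\ast}[R(G^\ast)<0]\leq\E[e^{-M(G^\ast)/(8\sigma^2)}]$, which vanishes uniformly in $G^\ast$ as soon as $nh\to\infty$. The delicate part is the type I error, where I must control $\sup_{|G|\geq h}R(G)$ simultaneously over all admissible segments. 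Since $R(G)$ depends on $G$ only through the block of consecutive order statistics it contains, this is a maximum over $O(n^2)$ blocks, and for a block of size $m$ the subgaussian bound gives $\PP[\sum\xi_i\geq m/2]\leq e^{-m/(8\sigma^2)}$. A naive union bound over all blocks is too lossy because small blocks dominate, so the constraint $|G|\geq h$ must be used to force every admissible block to contain at least $\asymp nh$ points. For (DD) this is automatic. For (RD) I would establish it via concentration of uniform spacings: $nh/\ln n\to\infty$ guarantees that, with high probability, every window of length $h$ contains at least $nh/2$ design points (equivalently, no $nh/2$ consecutive order statistics span more than $h$). On this event the union bound yields $\PP_\emptyset[R\geq0]\leq \mathrm{poly}(n)\,e^{-c_\sigma nh}\to0$ precisely when $nh/\ln n\to\infty$, which is where the logarithmic factor enters.

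The matching lower bound for $\mathcal S$ requires a mixture, since a single alternative would only recover the rate $1/n$. I would place $K=\lfloor1/h\rfloor$ disjoint segments $G_1,\dots,G_K$ of length $h$ and set $\bar\PP=\frac1K\sum_k\PP_{G_k}$, so that $\gamma_n(\tau_n)\geq1-\mathrm{TV}(\PP_\emptyset,\bar\PP)$ and $\mathrm{TV}\leq\frac12\sqrt{\chi^2(\bar\PP\Vert\PP_\emptyset)}$. Conditionally on the design the Gaussian likelihood ratios $L_k=d\PP_{G_k}/d\PP_\emptyset$ satisfy $\E_\emptyset[L_kL_l\mid\mathcal X]=1$ for $k\neq l$ by disjointness of supports and $\E_\emptyset[L_k^2\mid\mathcal X]=e^{M(G_k)/\sigma^2}$, which collapses the chi-square to $\chi^2=\frac1{K^2}\sum_k\E[e^{M(G_k)/\sigma^2}]-\frac1K\asymp h\,e^{c_\sigma'nh}$. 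When $nh/\ln n\to0$ we have $e^{c_\sigma'nh}=n^{o(1)}$ while $h=o(\ln n/n)$, so $\chi^2\to0$ and no test is consistent, pinning the rate at $(\ln n)/n$. The main obstacle throughout is the type I analysis of $T_n^1$ under (RD): the interplay between the spacing bound, which caps how few points an admissible segment can contain, and the cardinality of the block maximum is exactly what produces the logarithmic factor, and the lower-bound mixture must be calibrated so that the same factor reappears, confirming that $(\ln n)/n$ is sharp.
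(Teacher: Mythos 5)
Your proposal is correct, and its overall architecture --- direct tail bounds on the test statistics for consistency, a two-point bound for the $\mathcal S_0$ lower bound, and a mixture over $\lfloor 1/h\rfloor$ disjoint segments with a second-moment computation for the $\mathcal S$ lower bound --- matches the paper's. Most of the differences are cosmetic: the paper uses Bernstein rather than Hoeffding for the conditional binomial tails of $S$; it bounds the two-point testing affinity on $\mathcal S_0$ via an exact Hellinger computation (\eqref{hellinger1}--\eqref{hellinger2}) rather than KL plus Pinsker; and for the $\mathcal S$ lower bound it works directly with $\bar Z=\frac1M\sum_q Z_q$, showing $\E_\emptyset[\bar Z]=1$ and $\VV_\emptyset[\bar Z]\leq h\,e^{Cnh}\to 0$ (using exactly your observation that the $Z_q$ are uncorrelated by disjointness of supports), which is your $\chi^2$ bound in a different guise. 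The one genuine divergence is the type I error of $T_n^1$ under (RD): you condition on a uniform-spacings event (with high probability every length-$h$ window contains $\gtrsim nh$ design points) and then union-bound over blocks of size $\gtrsim nh$, whereas the paper keeps the sum over all pairs $(k,l)$, factorizes $\PP_\emptyset[R([X_k,X_l))>0,\,X_l-X_k>h]$ using independence of noise and design, and multiplies the noise tail $e^{-c(l-k)}$ by an MGF bound on $\PP[X_l-X_k>h]$ (Lemma \ref{lemmaRD}), choosing the MGF parameter so that the $(l-k)$-dependence cancels and each term is at most $ne^{-c'(n-1)h}$. Your route is more modular --- the design and the noise are controlled separately, and the good event is reusable elsewhere --- but it requires you to actually prove the uniform spacing claim (a discretization into $O(1/h)$ windows plus a Chernoff bound, which works precisely because $nh/\ln n\to\infty$); the paper's route avoids that uniformity step at the cost of the parameter matching in Lemma \ref{lemmaRD}. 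Both yield consistency exactly when $nh/\ln n\to\infty$, and both lower-bound computations pin the separation rates at $1/n$ and $(\ln n)/n$ respectively, so the conclusions agree.
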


In the next section, we show that the separation rates given in Theorem \ref{TestThm} are equal to the minimax rates of convergence on the corresponding classes.

\section{Estimation of a segment} \label{Sec:Estimation}

\subsection{A least square estimator}\label{Least square estimator}

	Let Model \eqref{Model} hold. For $G'\in\mathcal S$, let $A_0(G')=\sum_{i=1}^n\left(Y_i-\mathds 1(X_i\in G')\right)^2$ be the sum of squared errors. In order to estimate the true and unknown set $G$, we find a random set $\hat G_n$ which minimizes $A_0(G')$, among all possible candidates $G'$. Note that minimizing $A_0(G')$ is equivalent to maximizing 
\begin{equation}
	\label{LSECriterion}
	A(G')=\sum_{i=1}^n(2Y_i-1)\mathds 1(X_i\in G').
\end{equation}
Denote by $S=\{i=1,\ldots,n : X_i\in G\}$ and by $S'=\{i=1,\ldots,n : X_i\in G'\}$, for some $G'\in\mathcal S$. If we write $A(G')$ as a function of $S'$, \eqref{LSECriterion} becomes
	\begin{align*}
		A(S') & = \sum_{i\in S'}(2Y_i-1) \\
		& = \sum_{i\in S'}\left(2\mathds 1(X_i\in G)+2\xi_i-1\right) \\
		& = 2\#(S\cap S')-\#S'+2\sum_{i\in S'}\xi_i,
	\end{align*}
so, 
	\begin{equation}
		\label{GausProc}
		A(S')-A(S) = -\#(S\triangle S')+2\left(\sum_{i\in S'\backslash S}\xi_i-\sum_{i\in S\backslash S'}\xi_i\right).
	\end{equation}
We call a set $S'\subseteq\{1,\ldots,n\}$ convex if and only if it is of the form $\{i,\ldots,j\}$, for some $1\leq i\leq j\leq n$. It is clear that if a convex subset $S'$ of $\{1,\ldots,n\}$ maximizes $A(S')-A(S)$, then the segment $G'=[X_{\min S'},X_{\max S'}]$ maximizes $A(G')$.

\subsection{Estimation of one change-point} \label{Sec:OneChangePoint}

\paragraph{Under the deterministic design (DD)}

	Let Model \eqref{Model} hold, with design (DD). Assume that $G=[0,\theta]\in\mathcal S_0$, where $\theta\in [0,1]$. Let us make one preliminary remark. For any estimator $\hat G_n$ of $G$, the random segment $\tilde G_n= [0,\sup \hat G_n]$ performs better than $\hat G_n$, since $|\tilde G_n\triangle G|\leq|\hat G_n\triangle G|$ almost surely. Therefore, it is sufficient to only consider estimators of the form $\hat G_n=[0,\hat\theta_n]$, where $\hat \theta_n$ is a random variable. Then, $|\hat G_n\triangle G|=|\hat\theta_n-\theta|$, and the performance of the estimator $\hat G_n$ of $G$ is that of the estimator $\hat \theta_n$ of the change-point $\theta$.
Let us define a least square estimator (LSE) of $\theta$. For $M=1,\ldots,n$, let 
	\begin{align}
		F(M) & =A(\{1,\ldots,M\})  \label{CPfunctionCrit} \\
		& = \sum_{i=1}^M(2Y_i-1). \nonumber
	\end{align}
Let $\hat M_n\in\underset{M=1,\ldots,n}{\operatorname{ArgMax}} \text{ } F(M)$, and $\hat\theta_n=X_{\hat M_n}$. We have the following theorem.

	\begin{theorem}\label{ThmChangePointUB}
		Let $n\geq 1$. Let Model \eqref{Model} hold, with design (DD). Let $\hat G_n=[0,\hat \theta_n]$. Then, 
		\begin{equation*}
			\sup_{G\in\mathcal S_0}\PP_G\left[|\hat G_n\triangle G|\geq \frac{x}{n}\right]\leq C_0e^{-x/(8\sigma^2)}, \forall x>0,
		\end{equation*}
		where $C_0$ is a positive constant which depends on $\sigma$ only.
	\end{theorem}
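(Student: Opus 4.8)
The plan is to pass from the continuous estimation error to a discrete one and then to exploit the fact that the criterion $F$ has a deterministic linear drift away from its true maximizer. Under (DD) the design points falling in $G=[0,\theta]$ are exactly those with index at most $M^*:=\lfloor n\theta\rfloor$, so that $2Y_i-1=1+2\xi_i$ for $i\le M^*$ and $2Y_i-1=-1+2\xi_i$ for $i>M^*$. Since $|\hat G_n\triangle G|=|\hat\theta_n-\theta|$ with $\hat\theta_n=\hat M_n/n$, and $M^*/n\le\theta<(M^*+1)/n$, the triangle inequality gives $|\hat\theta_n-\theta|<(|\hat M_n-M^*|+1)/n$, whence the event $\{|\hat G_n\triangle G|\ge x/n\}$ is contained in $\{|\hat M_n-M^*|\ge x-1\}$. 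It therefore suffices to prove $\PP_G[|\hat M_n-M^*|\ge k]\le C_0'\,e^{-k/(8\sigma^2)}$ and to substitute $k=x-1$, absorbing the harmless factor $e^{1/(8\sigma^2)}$ into the final constant. For $x\le 1$ the asserted bound is trivial once $C_0\ge e^{1/(8\sigma^2)}$, and the boundary values $M^*\in\{0,n\}$ are covered by the same increment computation, using the convention that the empty sum $F(0)$ equals $0$.

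First I would compute the increments of $F$ relative to its value at the true index $M^*$. A direct telescoping using the two expressions for $2Y_i-1$ yields, for $j\ge 1$,
\[
F(M^*+j)-F(M^*)=-j+2\!\!\sum_{i=M^*+1}^{M^*+j}\!\!\xi_i,\qquad F(M^*-j)-F(M^*)=-j-2\!\!\sum_{i=M^*-j+1}^{M^*}\!\!\xi_i.
\]
The essential feature is the deterministic drift $-j$: for $\hat M_n$ to land $j$ steps away from $M^*$, the corresponding block of $j$ noise variables must overcome this linear penalty.

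Next I would apply the subgaussian tail bound. Because $\hat M_n$ maximizes $F$, the event $\{|\hat M_n-M^*|\ge k\}$ is contained in $\bigcup_{|M-M^*|\ge k}\{F(M)\ge F(M^*)\}$, so a plain union bound applies directly; crucially, no maximal inequality for the underlying random walk is needed, since the drift already renders the individual terms summable. By the two displays above, $\{F(M^*\pm j)\ge F(M^*)\}$ forces a sum of $j$ independent subgaussian variables to exceed $j/2$ in absolute value, so \eqref{subgauss} together with independence gives
\[
\PP_G\bigl[F(M^*\pm j)\ge F(M^*)\bigr]\le \exp\!\Bigl(-\frac{(j/2)^2}{2j\sigma^2}\Bigr)=e^{-j/(8\sigma^2)}.
\]
Summing this geometric series over $j\ge k$ and over the two sides produces $\PP_G[|\hat M_n-M^*|\ge k]\le \frac{2}{1-e^{-1/(8\sigma^2)}}\,e^{-k/(8\sigma^2)}$, a bound whose constant depends on $\sigma$ alone.

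The argument is uniform over $G\in\mathcal S_0$ precisely because none of these constants depends on $\theta$. The only genuinely delicate points are bookkeeping: the discretization shift between $|\hat\theta_n-\theta|$ and $|\hat M_n-M^*|/n$, which costs the factor $e^{1/(8\sigma^2)}$, and the treatment of $M^*\in\{0,n\}$. The heart of the matter, and the reason the sharp exponent $1/(8\sigma^2)$ appears, is that the signal contributes a penalty growing linearly in $j$ while the noise fluctuations are only of order $\sqrt{j}$; this gap is what makes even a crude union bound tight and is the feature I expect to require the most care to state cleanly rather than the most effort to prove.
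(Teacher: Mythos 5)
Your proposal is correct and follows essentially the same route as the paper: the same increment identity $F(M^*\pm j)-F(M^*)=-j\pm 2\sum\xi_i$, the same union bound over $j\ge k$ with the subgaussian/Markov tail $e^{-j/(8\sigma^2)}$, and the same geometric summation yielding a constant depending on $\sigma$ only (the paper merely dresses this up by embedding the noise in a doubly infinite sequence, which is superfluous under the deterministic design, and is in fact less explicit than you are about the $O(1/n)$ discretization shift between $\theta$ and $M^*/n$).
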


A simple application of Fubini's theorem yields the following result.

\begin{corollary}
	Let the assumptions of Theorem \ref{ThmChangePointUB} be satisfied. Then, for all $q>0$, there exists a positive constant $A_q$ which depends on $q$ and $\sigma$ only, such that
	\begin{equation*}
		\sup_{G\in\mathcal S_0} \E_G\left[|\hat G_n\triangle G|^q\right] \leq \frac{A_q}{n^q}.
	\end{equation*}
\end{corollary}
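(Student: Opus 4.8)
The plan is to convert the uniform exponential tail bound of Theorem \ref{ThmChangePointUB} into a moment bound by integrating the tail, which is the content of the ``simple application of Fubini's theorem.'' Fix $G\in\mathcal S_0$ and write $Z=|\hat G_n\triangle G|$, a nonnegative random variable. The starting point is the layer-cake (tail-integration) identity
\begin{equation*}
	\E_G\left[Z^q\right]=\int_0^\infty q\, t^{q-1}\,\PP_G\left[Z\geq t\right]\diff t,
\end{equation*}
which is exactly where Fubini enters: one writes $Z^q=\int_0^Z q\,t^{q-1}\diff t=\int_0^\infty q\,t^{q-1}\mathds 1(t\leq Z)\diff t$ and exchanges the expectation with the integral (everything being nonnegative, this is justified by Tonelli).

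Next I would perform the change of variables $t=x/n$, so that $\diff t=\diff x/n$ and $t^{q-1}=(x/n)^{q-1}$. This turns the identity into
\begin{equation*}
	\E_G\left[Z^q\right]=\frac{q}{n^q}\int_0^\infty x^{q-1}\,\PP_G\left[Z\geq \frac{x}{n}\right]\diff x.
\end{equation*}
Now I take the supremum over $G\in\mathcal S_0$ and invoke the uniform tail bound of Theorem \ref{ThmChangePointUB}, namely $\sup_{G\in\mathcal S_0}\PP_G[Z\geq x/n]\leq C_0 e^{-x/(8\sigma^2)}$. Since this bound holds simultaneously for every $G$, I may insert it under the integral sign and pull the supremum inside, obtaining
\begin{equation*}
	\sup_{G\in\mathcal S_0}\E_G\left[Z^q\right]\leq \frac{q\,C_0}{n^q}\int_0^\infty x^{q-1}e^{-x/(8\sigma^2)}\diff x.
\end{equation*}

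Finally I would recognize the remaining integral as a Gamma integral: $\int_0^\infty x^{q-1}e^{-x/(8\sigma^2)}\diff x=(8\sigma^2)^q\,\Gamma(q)$, which is finite for every $q>0$. Collecting the constants gives the claim with $A_q=C_0\,(8\sigma^2)^q\,\Gamma(q+1)$, which depends only on $q$ and $\sigma$ because $C_0$ depends only on $\sigma$. I do not expect any genuine obstacle here; the only point requiring a word of care is that the supremum is allowed to pass through the integral, and this is legitimate precisely because the tail estimate of Theorem \ref{ThmChangePointUB} is uniform over the class $\mathcal S_0$, so the dominating function $C_0 e^{-x/(8\sigma^2)}$ is independent of $G$.
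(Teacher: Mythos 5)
Your proposal is correct and is exactly the argument the paper has in mind: the paper gives no written proof beyond the remark that the corollary follows from ``a simple application of Fubini's theorem,'' and your layer-cake identity followed by the substitution $t=x/n$ and the Gamma integral is precisely that application, with the uniformity of the tail bound over $\mathcal S_0$ justifying the passage of the supremum as you note. Nothing is missing.
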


This corollary shows that the minimax risk on the class $\mathcal S_0$ is bounded from above by $1/n$, up to multiplicative constants.

\paragraph{Under the random design (RD)}

	We now consider Model \eqref{Model} hold, with design (RD). Assume that $G$ belongs to $\mathcal S_0$ and consider again the function $F(M)$ as defined in \eqref{CPfunctionCrit}, with $\hat M_n\in\underset{M=1,\ldots,n}{\operatorname{ArgMax}} \text{ } F(M)$, $\hat\theta_n=X_{\hat M_n}$ and $\hat G_n=[0,\hat \theta_n]$, as in the previous section. Then, the following theorem holds.

\begin{theorem}\label{ThmCPRD}
	Let $n\geq 1$ and let Model \eqref{Model} hold, with random design (RD). Then, the estimator $\hat G_n$ satisfies the following moment inequalities:
	\begin{equation*}
		\sup_{G\in\mathcal S_0} \E_G\left[|\hat G_n\triangle G|^q\right] \leq \frac{2C(2q)!(16\sigma^2)^{q}}{n^q},
	\end{equation*}
	for all positive integer $q$, where $C=2+16\sigma^2$.
Moreover, there exist universal constants $A_1$, $A_2$ and $A_3$ such that the following deviation inequality holds for all $x\geq 0$:

\begin{equation*}
	\PP\left[|\hat G_n\triangle G|\geq \frac{\sigma^2}{n}(x+A_1\sigma^2)\right]\leq e^{-\frac{x}{A_2\sqrt x+A_3\sigma^2}}.
\end{equation*}

\end{theorem}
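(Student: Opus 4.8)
The plan is to transfer the index-level analysis behind Theorem \ref{ThmChangePointUB} to the random design by conditioning on $N=\#(\mathcal X\cap G)$, the (random) number of design points falling inside $G=[0,\theta]$. Writing $\hat M_n$ for the maximizer of $F$ in \eqref{CPfunctionCrit}, the first observation is that $|\hat G_n\triangle G|=|X_{\hat M_n}-\theta|$, so everything reduces to controlling the \emph{spatial} deviation $|X_{\hat M_n}-\theta|$ through the \emph{index} deviation $K=|\hat M_n-N|$. Given $\{N=m\}$, the labels are $Y_i=\mathds 1(i\le m)+\xi_i$ and $F(m+j)-F(m)=-|j|+2\sum\xi_i$ over the appropriate block, exactly as in \eqref{GausProc}; crucially this depends only on the noise and on $m$, not on the positions of the $X_i$. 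Hence, conditionally on $N=m$, the random index $\hat M_n$ (a function of the noise) is independent of the spacings of the order statistics (a function of the design), and the same maximal-inequality argument used for (DD) gives a tail bound $\PP_G[K\ge k\mid N]\le C'e^{-k/(8\sigma^2)}$ that is uniform in $m$ (and in $\theta$), since the subgaussian random walk with unit drift does not see the design.

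First I would establish the moment bound. Conditionally on $N=m$ and on $\hat M_n=m+k$ with $k\ge 1$, the gap $X_{m+k}-\theta$ is the $k$-th order statistic of $n-m$ i.i.d.\ uniforms on $[\theta,1]$, so $\E_G[(X_{m+k}-\theta)^q\mid N=m]=(1-\theta)^q\prod_{i=0}^{q-1}\frac{k+i}{n-m+i+1}$, with the mirror expression for $k\le -1$ using $\theta$ and $m$. By the conditional independence above, $\E_G[|\hat G_n\triangle G|^q]=\E_N\big[\sum_k \PP_G[\hat M_n=N+k\mid N]\,\E_G[|X_{N+k}-\theta|^q\mid N]\big]$, and I would bound the probabilities uniformly by $C'e^{-k/(8\sigma^2)}$ and pull them out of the expectation over $N$. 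What remains is to average the Beta moments over $N\sim\mathrm{Bin}(n,\theta)$; here the clean identity $(1-\theta)^q\,\E\big[1/\big((n-N+1)\cdots(n-N+q)\big)\big]\le (n+1)^{-q}$ (and its mirror in $\theta$) does the job and, importantly, remains valid up to the boundary $\theta\in\{0,1\}$, where $N$ or $n-N$ may be small. Summing the resulting geometric-against-polynomial series $\sum_k e^{-k/(8\sigma^2)}\prod_{i=0}^{q-1}(k+i)$ produces the constant $2C(2q)!(16\sigma^2)^q$ and the rate $n^{-q}$.

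For the deviation inequality I would use the same decomposition but split on a threshold for the index deviation: $\PP_G[|X_{\hat M_n}-\theta|\ge t]\le \PP_G[K\ge k]+\PP_G[\,|X_{\hat M_n}-\theta|\ge t,\ K<k\,]$. The first term is exponentially small by the uniform index tail, while on $\{K<k\}$ the spatial deviation is dominated, conditionally on $N$, by a sum of at most $k$ consecutive uniform spacings, i.e.\ a $\mathrm{Beta}(k,n+1-k)$ variable of mean $\asymp k/n$, whose upper tail is of Bernstein (subexponential) type. Choosing $t=\frac{\sigma^2}{n}(x+A_1\sigma^2)$ and optimizing the cut $k$ balances the subgaussian index tail $e^{-k/(8\sigma^2)}$ against the subexponential spacing tail; the subexponential (rather than subgaussian) nature of the spacing sum is exactly what produces the $\sqrt x$ term in the denominator of the stated bound.

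The step I expect to be most delicate is this last balancing, together with keeping every estimate uniform in $\theta\in[0,1]$: the averaging over the random $N$ must not degrade the $n^{-q}$ rate even when $\theta$ is within $O(1/n)$ of an endpoint, and it is the exact binomial identity above---rather than a crude concentration of $N$ around $n\theta$---that guarantees this. The decoupling of noise and design given $N$ is the conceptual key that makes all of these computations reduce to the one-dimensional random walk already controlled in Theorem \ref{ThmChangePointUB}.
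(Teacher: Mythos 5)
Your argument for the moment bound is essentially the paper's: the paper also conditions on the count $M=\#(\mathcal X\cap[0,\theta])$, uses the design-free index tail $\PP_G[|\hat M_n-M|\geq k]\leq 2Ce^{-k/(8\sigma^2)}$ (its inequality \eqref{devind}, valid for any design), computes the spacing/order-statistic moments exactly via Lemma \ref{Lemma2} and the same binomial identity $\frac{n!}{(n+q)!}$-type cancellation you invoke, and sums the polynomial-times-geometric series with Lemma \ref{ExtraLemma}. The one place you improve on the paper here is that you exploit the exact conditional independence of $\hat M_n$ (noise-measurable given $M$) and the spacings (design-measurable), where the paper instead applies Cauchy--Schwarz and therefore needs $2q$-th spacing moments; your route is cleaner and would give slightly better constants, though you should state explicitly that the term vanishes when $k>n-M$. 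Where you genuinely diverge is the deviation inequality: the paper does not redo any tail analysis but feeds the moment bounds into a general moments-to-tails Bernstein-type result (Lemma \ref{Bernstein}), and the $\sqrt x$ in the denominator is an artifact of that conversion, not of the spacings. Your direct truncation on the index deviation $K$, combined with the binomial/Beta tail of the spacings, in fact yields a \emph{stronger} bound: writing $J=\#(\mathcal X\cap(\theta,\theta+t))\sim\mathrm{Bin}(n,t)$, the event $\{X_{\hat M_n}-\theta\geq t\}$ forces $\hat M_n\geq M+J+1$, so conditioning on the design gives $\PP_G[X_{\hat M_n}-\theta\geq t]\leq 2C\,\E[e^{-(J+1)/(8\sigma^2)}]\leq 2C e^{-nt(1-e^{-1/(8\sigma^2)})}$, a pure exponential in $nt$ which implies the stated inequality with room to spare. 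So your heuristic that the subexponential spacing tail ``produces the $\sqrt x$'' is not quite right, but the approach itself is sound and arguably sharper than the paper's; the only bookkeeping you would owe is matching the advertised constants ($2C(2q)!(16\sigma^2)^q$ and universal $A_1,A_2,A_3$ with explicit $\sigma^2$ dependence), which is routine.
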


\vspace{8mm}

Note that the estimator that we define is the same as in \cite[Section 1.9]{KTlectureNotes1993}, i.e., the least square estimator. However, we do not make use of a separation assumption of $\theta$ from $0$ and $1$, unlike in their proof, which shows that this assumption, which is only technical, is not linked to their estimation method. Next theorem shows that up to constants, $1/n$ is also a lower bound on the minimax risk for both designs (DD) and (RD). 

\begin{theorem}\label{ThmChangePointLB}
	Consider Model \eqref{Model}, with design (DD) or (RD). Then for all integer $n\geq 1$,
	\begin{equation*}
		\mathcal R_n(\mathcal S_0)\geq \frac{1}{8n}.
	\end{equation*}
\end{theorem}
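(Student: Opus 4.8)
The plan is to use Le Cam's two-point method, but with a twist that makes the argument completely free of any divergence computation (and hence free of the subgaussian constant $\sigma$): I would choose the two competing hypotheses so that the corresponding models are \emph{exactly} indistinguishable on a large event. First I would fix $\Delta=\frac{1}{n+1}$ and take the two segments $G_0=[0,\theta_0]$ and $G_1=[0,\theta_1]$ with $\theta_1-\theta_0=\Delta$, positioned inside $[0,1]$ so that both belong to $\mathcal S_0$. Since $|\cdot\triangle\cdot|$ is the $L^1$ distance between indicator functions, it is a genuine metric, so for \emph{any} segment estimator $\hat G_n$ the triangle inequality gives
\[
|G_0\triangle\hat G_n|+|G_1\triangle\hat G_n|\ \ge\ |G_0\triangle G_1|=\Delta .
\]
This reduces the task to showing that $\E_{G_0}$ and $\E_{G_1}$ cannot simultaneously make their respective terms small.

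The key step is the indistinguishability event. The laws $\PP_{G_0}$ and $\PP_{G_1}$ attach labels $\mathds 1(X_i\le\theta_0)$ and $\mathds 1(X_i\le\theta_1)$, which agree at every design point lying outside the ``flip region'' $(\theta_0,\theta_1]$. Let $E$ be the design-measurable event that no $X_i$ falls in $(\theta_0,\theta_1]$. Conditionally on the design, the vector $(Y_1,\dots,Y_n)$ then has the same distribution under $G_0$ and $G_1$ on $E$, and the design law is common to both models; hence $\E_{G_0}[f\,\mathds 1_E]=\E_{G_1}[f\,\mathds 1_E]$ for every nonnegative measurable $f$. Combining this coincidence with the triangle inequality and discarding the (nonnegative) contribution off $E$,
\[
\E_{G_0}\!\left[|G_0\triangle\hat G_n|\right]+\E_{G_1}\!\left[|G_1\triangle\hat G_n|\right]\ \ge\ \E_{G_0}\!\left[\bigl(|G_0\triangle\hat G_n|+|G_1\triangle\hat G_n|\bigr)\mathds 1_E\right]\ \ge\ \Delta\,\PP(E).
\]
Under (RD) one has $\PP(E)=(1-\Delta)^n$, while under (DD) I would place $\theta_0,\theta_1$ inside a single inter-design gap so that $E$ holds surely; in both cases $\PP(E)\ge(1-\Delta)^n$, which handles the two designs uniformly.

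Finally I would pass to the maximum of the two expectations, which is at least half their sum, and bound the supremum over $\mathcal S_0$ from below by it, yielding $\mathcal R_n(\mathcal S_0)\ge\frac12\Delta(1-\Delta)^n$. Substituting $\Delta=\frac{1}{n+1}$ turns the desired inequality $\frac12\Delta(1-\Delta)^n\ge\frac{1}{8n}$ into the elementary estimate $(1+\tfrac1n)^{n+1}\le 4$, valid for every $n\ge1$ (the sequence decreases to $e$ and equals $4$ at $n=1$). The only genuine subtlety is this calibration: a crude bound such as $\mathrm{TV}(\PP_{G_0},\PP_{G_1})\le 1-(1-\Delta)^n$ loses too much to reach the stated constant, whereas the \emph{exact} coincidence of the two laws on $E$ is what produces the clean universal constant $\tfrac18$ together with its independence of $\sigma$; the fact that equality holds at $n=1$ is precisely why the constant is $\tfrac18$ and not something larger.
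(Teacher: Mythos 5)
Your proof is correct and takes essentially the same route as the paper's: a two-point argument with two nested segments of $\mathcal S_0$ whose laws coincide exactly on the (design-measurable) event that no design point falls in their symmetric difference, combined with the triangle inequality for the Nikodym distance and the bound $\PP(E)\ge(1-\Delta)^n$. The only differences are cosmetic calibrations --- you take $\Delta=\frac{1}{n+1}$ and use $(1+\frac1n)^{n+1}\le 4$, while the paper takes $\Delta=\frac{1}{2n}$ and uses $(1-\frac{1}{2n})^n\ge\frac12$, and you restrict to $E$ via an indicator rather than by conditioning --- both yielding the same constant $\frac{1}{8n}$.
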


Note that in the case of the deterministic design, the lower bound is determined by the fact that the parameter $\theta$ in \eqref{ChangePoint} is not identified by the model.

As a consequence, the minimax rate of convergence on the class $\mathcal S_0$ is $1/n$ under both designs (DD) and (RD):

\begin{theorem}\label{minimaxCP}
	Consider Model \eqref{Model}, with either design (DD) or (RD). Then, the minimax risk on the class $\mathcal S_0$ satisfies 
	\begin{equation*}
		\mathcal R_n(\mathcal S_0)\asymp_\sigma \frac{1}{n}.
	\end{equation*}
\end{theorem}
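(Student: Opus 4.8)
The plan is to recognize that the claimed two-sided bound is an immediate consequence of the upper bounds furnished by the least square estimator analyzed in the preceding subsection, combined with the information-theoretic lower bound of Theorem \ref{ThmChangePointLB}. Since $\asymp_\sigma$ requires exhibiting positive constants $c$ and $C(\sigma)$ with $c/n \leq \mathcal R_n(\mathcal S_0) \leq C(\sigma)/n$ for $n$ large enough, it suffices to assemble one inequality in each direction, separately for each of the two designs.

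For the lower bound I would invoke Theorem \ref{ThmChangePointLB} directly: under either design, $\mathcal R_n(\mathcal S_0) \geq 1/(8n)$, and this holds for every $n \geq 1$. The constant here is universal, in particular independent of $\sigma$, so this direction poses no difficulty whatsoever.

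For the upper bound the key observation is that the minimax risk $\mathcal R_n(\mathcal S_0) = \inf_{\tilde G_n} \mathcal R_n(\tilde G_n ; \mathcal S_0)$ is bounded above by the risk of any single estimator, so I would simply plug in the explicit LSE $\hat G_n = [0, \hat\theta_n]$ with $\hat\theta_n = X_{\hat M_n}$. Under design (DD), applying the Corollary following Theorem \ref{ThmChangePointUB} with $q = 1$ gives $\sup_{G\in\mathcal S_0}\E_G[|\hat G_n \triangle G|] \leq A_1/n$, whence $\mathcal R_n(\mathcal S_0) \leq A_1/n$ with $A_1$ depending on $\sigma$ only. Under design (RD), Theorem \ref{ThmCPRD} with $q = 1$ yields $\sup_{G\in\mathcal S_0}\E_G[|\hat G_n \triangle G|] \leq 64\, C \sigma^2/n$, where $C = 2 + 16\sigma^2$, again a $\sigma$-dependent constant divided by $n$. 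In both cases the bound holds for all $n \geq 1$.

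Combining the two directions gives $\tfrac{1}{8n} \leq \mathcal R_n(\mathcal S_0) \leq \tfrac{C(\sigma)}{n}$ for each design, which is precisely the assertion $\mathcal R_n(\mathcal S_0) \asymp_\sigma 1/n$. I expect no genuine obstacle at this stage: all of the analytic work has already been carried out in Theorems \ref{ThmChangePointUB}, \ref{ThmCPRD}, and \ref{ThmChangePointLB}, and the present statement merely repackages the matching upper and lower bounds into the $\asymp_\sigma$ notation. The only point meriting a word of care is to confirm that the upper-bound constants are genuinely functions of $\sigma$ alone and that the lower-bound constant is compatible with (indeed, smaller than) them, so that the two inequalities are consistent for all large $n$.
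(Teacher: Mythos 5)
Your proposal is correct and coincides with the paper's own argument: the theorem is stated there precisely as the combination of the lower bound of Theorem \ref{ThmChangePointLB} with the $q=1$ upper bounds from the corollary to Theorem \ref{ThmChangePointUB} (design (DD)) and from Theorem \ref{ThmCPRD} (design (RD)). Nothing further is needed.
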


\subsection{Estimation of two change-points}
	Let us now assume that the unknown segment $G$ does not necessarily contain 0. We will prove that the classes $\mathcal S$ and $\mathcal S(\mu)$ yield two minimax rates that differ by a logarithmic factor.
	
	When $G$ is only assumed to belong to the largest class $\mathcal S$, we define the same LSE estimator as we did in \cite{Brunel2013} for convex polytopes and we prove an upper bound of the same order, i.e., $(\ln n)/n$. This estimator is obtained by maximising $A(G')$ (see \eqref{LSECriterion}) over all segments $G'$ with edge points that are integer multiples of $1/n$.
	
	If $|G|$ is \textit{a priori} known to be greater or equal to $\mu$, we split the available sample into two parts. With the first half, we define the same LSE estimator as in the general case seen above. This estimator may not be minimax optimal on $\mathcal S(\mu)$, but it is close to $G$ with high probability as shown in Theorem \ref{LSETheorem1}. Thus, the middle point $\hat m_n$ of this estimator is inside $G$ with high probability. On that event, we use the second half of the sample in order to estimate the two edge points of $G$, one on each side of $\hat m_n$, using the same technique as on the class $S_0$, where the base point $0$ is now replaced with $\hat m_n$.

\subsubsection{On the class $\mathcal S$}

	Let us first state the following theorem, which is, for the design (RD), a particular case of \cite[Theorem 1]{Brunel2013}, for $d=1$. 
	\begin{theorem}\label{LSETheorem1}
		Let $n\geq 2$. Let Model \eqref{Model} hold, with design (DD) or (RD).
		Let $\hat G_n\in\underset{G'\in\mathcal S}{\operatorname{ArgMax}} \text{ } A(G')$ be a LSE estimator of $G$. Then, there exist two positive constants $C_1$ and $C_2$ which depend on $\sigma$ only, such that
		\begin{equation*}
			\sup_{G\in\mathcal S} \PP_G\left[n\left(|\hat G_n\triangle G|-\frac{4\ln n}{C_2 n}\right)\geq x\right]\leq C_1e^{-C_2 x}, \forall x>0.
		\end{equation*}
	\end{theorem}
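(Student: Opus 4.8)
The plan is to exploit the defining maximality of the least squares estimator together with a peeling argument over the candidate segments, whose merely polynomial cardinality will be the source of the logarithmic factor. Since $A(G')$ depends on $G'$ only through its index set $S'=\mathcal X\cap G'$, and since the true index set $S=\mathcal X\cap G$ is convex (the $X_i$ being ordered) and hence an admissible competitor, the maximality of $\hat G_n$ gives $A(\hat S_n)\geq A(S)$, where $\hat S_n=\mathcal X\cap\hat G_n$. Inserting this into the identity \eqref{GausProc} yields the basic inequality
\begin{equation*}
	\#(S\triangle\hat S_n)\leq 2\left(\sum_{i\in\hat S_n\setminus S}\xi_i-\sum_{i\in S\setminus\hat S_n}\xi_i\right)=:2Z(\hat S_n).
\end{equation*}
Thus, controlling the number of design points in the symmetric difference reduces to controlling a signed sum of noise variables indexed by $S\triangle\hat S_n$.

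For a fixed convex set $S'$ with $\#(S\triangle S')=k$, the variable $Z(S')=\sum_{i\in S'\setminus S}\xi_i-\sum_{i\in S\setminus S'}\xi_i$ is subgaussian with variance proxy $\sigma^2 k$ by \eqref{subgauss}, so $\PP[Z(S')\geq t]\leq e^{-t^2/(2\sigma^2 k)}$. The key structural fact is that a convex subset of $\{1,\dots,n\}$ is determined by its two endpoints, so there are at most $n^2$ candidate segments, and a fortiori at most $n^2$ with $\#(S\triangle S')=k$. Taking $t=k/2$ and a union bound gives
\begin{equation*}
	\PP\left[\exists\text{ convex }S',\ \#(S\triangle S')=k,\ Z(S')\geq \tfrac{k}{2}\right]\leq n^2\, e^{-k/(8\sigma^2)}.
\end{equation*}
Combined with $\#(S\triangle\hat S_n)\le 2Z(\hat S_n)$, summing this geometric bound over $k$ beyond a threshold $k_0$ of order $\sigma^2\ln n$ (so that $n^2e^{-k_0/(8\sigma^2)}$ is already small) makes the series converge; the threshold produces the deterministic term $4\ln n/(C_2 n)$ with $C_2$ of order $1/\sigma^2$, while the residual mass furnishes the exponential tail $C_1 e^{-C_2 x}$ in the excess $x$.

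The remaining ingredient is to pass from the discrete count $\#(S\triangle\hat S_n)$ to the Lebesgue measure $|\hat G_n\triangle G|$, i.e.\ to show that a symmetric difference of large measure must contain proportionally many design points. Under the regular design (DD) this is immediate: $\hat G_n\triangle G$ is a union of at most two intervals and the design points are equispaced with gap $1/n$, so $|\hat G_n\triangle G|\le(\#(S\triangle\hat S_n)+2)/n$, and the boundary term is absorbed into the constants. Under the random design (RD) this step is the genuine difficulty, and is where I would invoke the one dimensional instance of \cite[Theorem 1]{Brunel2013}: one needs a concentration estimate guaranteeing, with overwhelming probability simultaneously over all $n^2$ candidate sub-intervals, that the empirical count of design points in an interval is bounded below by a fixed fraction of $n$ times its length, down to scales of order $(\ln n)/n$. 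I expect this uniform comparison between the empirical and the Lebesgue measure, rather than the peeling bound itself, to be the main obstacle, since it must hold uniformly over the competitors and precisely at the scale where the count $k_0\asymp\sigma^2\ln n$ meets the target rate.
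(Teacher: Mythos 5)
Your decomposition is the right one, and it is essentially the argument behind the result: the paper itself gives no proof of this theorem but defers entirely to \cite[Theorem 1]{Brunel2013} (for $d=1$), and that proof is built on exactly the basic inequality $\#(S\triangle\hat S_n)\leq 2Z(\hat S_n)$ from \eqref{GausProc}, a subgaussian tail bound for each of the $O(n^2)$ convex competitors at distance $k$, and a union bound whose $n^2$ cardinality is the source of the $\ln n$. For the design (DD) your argument is complete up to bookkeeping: the conversion $|\hat G_n\triangle G|\leq(\#(S\triangle\hat S_n)+2)/n$ is immediate and the geometric series over $k\geq k_0\asymp\sigma^2\ln n$ gives both the deterministic offset and the exponential tail.

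For (RD), however, the step you defer is a genuine gap, and as stated your fix is circular: the theorem \emph{is} the $d=1$ case of the result you propose to invoke. Two points about why this step needs care. First, the additive uniform bound (Dvoretzky--Kiefer--Wolfowitz) only gives $\#(\mathcal X\cap I)\geq n|I|-O(\sqrt{n\ln n})$, which is useless at the target scale $(\ln n)/n$; you need the multiplicative version, i.e.\ a relative Chernoff bound $\PP[\mathrm{Bin}(n,\ell)\leq n\ell/2]\leq e^{-n\ell/8}$ applied to a grid of $O(n^2)$ intervals, valid only for $\ell\gtrsim(\ln n)/n$. Second, and more importantly, you cannot package this as a single global ``good design'' event and add its failure probability at the end: that failure probability is only polynomially small (of order $n^{-c}$, dominated by the shortest admissible intervals), whereas the claimed bound $C_1e^{-C_2x}$ must hold for all $x$ up to order $n$, where it is exponentially small in $n$. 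The correct way is to run the comparison scale by scale inside the union bound: for each target $t$, split $\{|\hat G_n\triangle G|\geq t\}$ into $\{\#(S\triangle\hat S_n)\geq nt/4\}$ (controlled by your noise peeling) and $\{$some interval of length $\geq t/2$ contains fewer than half its expected number of design points$\}$, whose probability is itself $\leq Cn^2e^{-cnt}$ and hence matches the exponential tail in $x$ after absorbing $n^2$ into the deterministic $\ln n$ offset. With that modification your outline closes; without it, the stated deviation inequality is only established for $x=O(\ln n)$.
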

	
The expressions of $C_1$ and $C_2$ are given in the proof of \cite[Theorem 1]{Brunel2013}, for the design (RD). For the design (DD), we do not give a proof of this theorem here, but it can be easily adapted from that of the case of the design (RD).
The next corollary comes as an immediate consequence. 

\begin{corollary}
	Let the assumptions of Theorem \ref{LSETheorem1} be satisfied. Then, for all $q>0$, there exists a positive constant $B_q$ which depends on $q$ and $\sigma$ only, such that
	\begin{equation*}
		\sup_{G\in\mathcal S} \E_G\left[|\hat G_n\triangle G|^q\right] \leq B_q\left(\frac{\ln n}{n}\right)^q.
	\end{equation*}
\end{corollary}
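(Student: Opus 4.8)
The plan is to integrate the deviation inequality of Theorem~\ref{LSETheorem1} against the power $q$, using the layer-cake (Fubini) representation of the moment. Write $Z=|\hat G_n\triangle G|$ and $a_n=\frac{4\ln n}{C_2 n}$. After the substitution $x=nu$, Theorem~\ref{LSETheorem1} reads, uniformly over $G\in\mathcal S$,
\[
	\PP_G\left[Z\geq a_n+u\right]\leq C_1 e^{-C_2 n u},\qquad \forall u>0.
\]
Since this tail bound is already uniform in $G$, every estimate derived from it passes to the supremum over $\mathcal S$ without extra work; I can therefore fix $G$ and suppress the subscript throughout.

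First I would reduce the $q$-th moment to the excess of $Z$ over $a_n$. From $Z\leq a_n+(Z-a_n)_+$ and the elementary inequality $(s+t)^q\leq c_q(s^q+t^q)$ (with $c_q=1$ for $0<q\leq 1$ by subadditivity and $c_q=2^{q-1}$ for $q\geq 1$ by convexity of $x\mapsto x^q$), one gets
\[
	\E\left[Z^q\right]\leq c_q\left(a_n^q+\E\big[(Z-a_n)_+^q\big]\right).
\]
The first term equals $c_q\left(\frac{4}{C_2}\right)^q\left(\frac{\ln n}{n}\right)^q$, which is already of the desired form. The second step is to control the excess moment by integrating the tail: by the layer-cake formula and the bound above,
\[
	\E\big[(Z-a_n)_+^q\big]=\int_0^\infty q u^{q-1}\,\PP[Z\geq a_n+u]\,\diff u\leq C_1 q\int_0^\infty u^{q-1}e^{-C_2 n u}\,\diff u=\frac{C_1\,\Gamma(q+1)}{(C_2 n)^q}.
\]

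This leaves a residual term of order $1/n^q$ rather than $(\ln n/n)^q$, and the final step converts it: since $n\geq 2$ forces $\ln n\geq \ln 2>0$, we have $n^{-q}\leq (\ln 2)^{-q}\left(\frac{\ln n}{n}\right)^q$, so this contribution is at most $C_1\,\Gamma(q+1)(C_2\ln 2)^{-q}\left(\frac{\ln n}{n}\right)^q$. Collecting the two pieces gives
\[
	\sup_{G\in\mathcal S}\E_G\left[Z^q\right]\leq c_q\left(\left(\frac{4}{C_2}\right)^q+\frac{C_1\,\Gamma(q+1)}{(C_2\ln 2)^q}\right)\left(\frac{\ln n}{n}\right)^q,
\]
and the bracketed factor, which depends only on $q$, $C_1$ and $C_2$ — hence only on $q$ and $\sigma$ — may be taken as $B_q$.

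Since the statement is billed as an immediate consequence, there is no genuine obstacle; the only points requiring a little care are keeping $B_q$ dependent on $q$ and $\sigma$ alone (automatic, as $C_1,C_2$ depend only on $\sigma$), treating the regimes $0<q\leq 1$ and $q\geq 1$ with the appropriate elementary inequality, and converting the residual $n^{-q}$ into $(\ln n/n)^q$, which is precisely where the hypothesis $n\geq 2$ enters.
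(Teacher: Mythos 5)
Your proof is correct and follows essentially the route the paper intends: the paper presents this corollary as an immediate consequence of Theorem \ref{LSETheorem1} via the same Fubini/layer-cake integration of the tail bound that you carry out explicitly. The only details you add — the split $Z\leq a_n+(Z-a_n)_+$, the case distinction on $q$ for $(s+t)^q\leq c_q(s^q+t^q)$, and absorbing the residual $n^{-q}$ into $(\ln n/n)^q$ using $n\geq 2$ — are all sound and keep $B_q$ depending only on $q$ and $\sigma$.
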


This corollary shows that the minimax risk on the class $\mathcal S$ is bounded from above by $\ln(n)/n$, up to a multiplicative constant. The following theorem establishes a lower bound, if the noise is supposed to be Gaussian.

\begin{theorem}\label{thmLBBru13}
	Consider Model \eqref{Model}, with design (DD) or (RD). Assume that the noise terms $\xi_i$ are i.i.d. Gaussian random variables, with variance $\sigma^2>0$. For any large enough $n$, 
	\begin{equation*}
		\mathcal R_n(\mathcal S)\geq \frac{\alpha^2\sigma^2\ln n}{n},
	\end{equation*}
	where $\alpha$ is a universal positive constant.
\end{theorem}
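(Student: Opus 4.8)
The plan is to establish the lower bound by reducing estimation to a multiple-hypothesis testing problem and invoking a Fano-type inequality. The logarithmic factor will emerge from testing among a family of candidate segments whose cardinality is polynomially large in $n$, rather than from a two-point comparison; a two-point argument à la Le Cam only controls one Kullback--Leibler divergence $\mathrm{KL}(\PP_{G_1}\,\|\,\PP_{G_0})\asymp nh/\sigma^2$ against a constant, which forces $h\asymp 1/n$ and reproduces only the rate of Theorem \ref{ThmChangePointLB}.

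First I would fix a length $h=\frac{\gamma\sigma^2\ln n}{n}$, where $\gamma>0$ is a small universal constant to be chosen, and partition $[0,1]$ into $M=\lfloor 1/h\rfloor$ disjoint consecutive segments $G_j=[(j-1)h,jh]$, $j=1,\ldots,M$. Together with the degenerate segment $G_0=\{0\}$, which has Lebesgue measure zero and lies in $\mathcal S$, these furnish $M+1$ hypotheses, all belonging to $\mathcal S$. Their pairwise Nykodim distances satisfy $|G_j\triangle G_k|=2h$ for $1\le j<k\le M$ and $|G_0\triangle G_j|=h$, so the family is $2s$-separated with $s=h/2$. The standard reduction then applies: if $\hat G_n$ is any estimator and $\psi$ selects the index $j$ minimizing $|\hat G_n\triangle G_j|$, then $|\hat G_n\triangle G_j|<s$ forces $\psi=j$, so any lower bound on $\inf_\psi\max_j\PP_{G_j}[\psi\neq j]$ yields one on $\sup_j\PP_{G_j}[|\hat G_n\triangle G_j|\ge s]$.

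Next I would control the information content of the family through the divergences to the reference $\PP_{G_0}$. Under Gaussian noise $Y_i\sim N(\mathds 1(X_i\in G),\sigma^2)$, so conditionally on the design one has $\mathrm{KL}(\PP_{G_j}\,\|\,\PP_{G_0})=\#(\mathcal X\cap G_j)/(2\sigma^2)$, since the means differ by $1$ exactly at the design points falling in $G_j$. For design (RD) this is made unconditional by noting that the marginal law of $\mathcal X$ does not depend on $G$, so averaging over the design gives $\mathrm{KL}(\PP_{G_j}\,\|\,\PP_{G_0})=\E[\#(\mathcal X\cap G_j)]/(2\sigma^2)=nh/(2\sigma^2)$, while for (DD) one has the deterministic bound $\#(\mathcal X\cap G_j)\le nh+1$. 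In either case $\frac1M\sum_{j=1}^M\mathrm{KL}(\PP_{G_j}\,\|\,\PP_{G_0})\le\frac{nh}{2\sigma^2}(1+o(1))=\frac{\gamma\ln n}{2}(1+o(1))$, whereas $\ln M=\ln n-\ln\ln n-\ln(\gamma\sigma^2)=(1+o(1))\ln n$. Hence, for $\gamma$ small enough the average divergence is bounded by $\alpha_0\ln M$ with $\alpha_0<1/8$ for all large $n$.

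Finally I would apply Tsybakov's Fano-type theorem (Theorem 2.5 in his monograph), which under the $2s$-separation and the information bound just verified gives $\inf_\psi\max_j\PP_{G_j}[\psi\neq j]\ge c(\alpha_0)>0$. Combined with the reduction and Markov's inequality this yields $\mathcal R_n(\mathcal S)\ge\sup_j\E_{G_j}[|\hat G_n\triangle G_j|]\ge s\,c(\alpha_0)=\tfrac{h}{2}c(\alpha_0)\gtrsim\frac{\sigma^2\ln n}{n}$, which is the claimed bound after absorbing constants into a universal $\alpha^2$. The delicate point, and the reason the multiple-hypothesis construction is essential, is the simultaneous calibration of three quantities at the scale $h\asymp\frac{\ln n}{n}$: the number of disjoint hypotheses is then $M\asymp n/\ln n$, so that $\ln M\asymp\ln n$ matches \emph{exactly} the per-hypothesis divergence budget $nh/(2\sigma^2)\asymp\ln n$. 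Choosing $h$ any larger would violate the information bound, while choosing it smaller would recover only the $1/n$ rate; the only remaining work is to track the explicit constants through the Fano bound to produce the stated form $\frac{\alpha^2\sigma^2\ln n}{n}$.
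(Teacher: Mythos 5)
Your argument is correct, but note that the paper does not actually prove Theorem \ref{thmLBBru13} internally: it invokes \cite[Theorem 2]{Brunel2013} for the design (RD) and asserts that the argument adapts to (DD), so your Fano-type construction supplies a self-contained proof rather than retracing one given in the text. It is nevertheless very close in spirit to what the paper does elsewhere: the family $G_j=[(j-1)h,jh]$, $j=1,\ldots,M=\lfloor 1/h\rfloor$, with $h\asymp \sigma^2(\ln n)/n$ is exactly the packing used in the detection lower bound for $\mathcal S$ in the proof of Theorem \ref{TestThm}, where the author runs a second-moment argument on the averaged likelihood ratio $\bar Z$ instead of Fano, and the cited proof in \cite{Brunel2013} is likewise a many-hypothesis reduction. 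The key steps of your write-up check out: the conditional Kullback--Leibler divergence equals $\#\left(\mathcal X\cap G_j\right)/(2\sigma^2)$ and integrates (RD), or is bounded (DD), by $nh/(2\sigma^2)+O(1)$; $\ln M=(1+o(1))\ln n$; the $2s$-separation with $s=h/2$ is valid once the degenerate segment $G_0=[0,0]\in\mathcal S$ is included as the reference hypothesis; and the resulting constant $\gamma\, c(\alpha_0)/2$ is universal, so the bound has the stated form $\alpha^2\sigma^2(\ln n)/n$. The only point worth making explicit is that ``$n$ large enough'' must be allowed to depend on $\sigma$ (to guarantee $M\geq 2$ and to absorb the $o(1)$ terms, which involve $\ln(\gamma\sigma^2)$), which is consistent with the statement. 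Your observation that a two-point Le Cam comparison cannot produce the logarithmic factor is also accurate and is precisely why the argument of Theorem \ref{ThmChangePointLB} only yields $1/n$.
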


This lower bound comes from \cite[Theorem 2]{Brunel2013} in the case of the design (RD), and the proof is easily adapted for the design (DD). As a consequence, the minimax risk on the class $\mathcal S$ is of the order $\ln(n)/n$:

\begin{theorem}\label{minimaxriskgen}
		Consider Model \eqref{Model}, with design (DD) or (RD). Assume that the noise terms $\xi_i$ are i.i.d. Gaussian random variables, with variance $\sigma^2>0$. The minimax risk on the class $\mathcal S$ satisfies, asymptotically:
		$$\mathcal R_n(\mathcal S) \asymp_\sigma \frac{\ln n}{n}.$$
\end{theorem}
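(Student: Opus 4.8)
The plan is simply to combine the upper and lower bounds that have already been established, since Theorem \ref{minimaxriskgen} is exactly the assertion that these two bounds match up to multiplicative constants depending only on $\sigma$.

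First I would recall the definition $(**)$ of the minimax risk, namely $\mathcal R_n(\mathcal S)=\inf_{\tilde G_n}\sup_{G\in\mathcal S}\E_G\left[|G\triangle \tilde G_n|\right]$. For the upper bound, I would invoke the corollary following Theorem \ref{LSETheorem1} with the exponent $q=1$: the least square estimator $\hat G_n\in\operatorname{ArgMax}_{G'\in\mathcal S}A(G')$ satisfies $\sup_{G\in\mathcal S}\E_G\left[|\hat G_n\triangle G|\right]\leq B_1\frac{\ln n}{n}$, where $B_1$ depends only on $\sigma$. Since the minimax risk is an infimum over \emph{all} estimators, it is bounded above by the risk of this particular one, so $\mathcal R_n(\mathcal S)\leq B_1\frac{\ln n}{n}$ for all $n\geq 2$.

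For the matching lower bound, I would appeal directly to Theorem \ref{thmLBBru13}, which under the Gaussian noise assumption yields $\mathcal R_n(\mathcal S)\geq \alpha^2\sigma^2\frac{\ln n}{n}$ for all large enough $n$, with $\alpha$ a universal constant. Combining the two estimates gives $\alpha^2\sigma^2\frac{\ln n}{n}\leq \mathcal R_n(\mathcal S)\leq B_1\frac{\ln n}{n}$ for $n$ large enough, which is precisely the claim $\mathcal R_n(\mathcal S)\asymp_\sigma \frac{\ln n}{n}$, with $c(\sigma)=\alpha^2\sigma^2$ and $C(\sigma)=B_1$.

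There is essentially no obstacle in this final assembly: the entire substance of the result is carried by the two ingredient bounds. I would only remark that the Gaussian assumption enters solely through the lower bound of Theorem \ref{thmLBBru13}, whereas the upper bound from the corollary holds for general subgaussian noise. The only genuinely nontrivial work (carried out elsewhere) lies in constructing the collection of well-separated hypotheses behind the Fano- or Assouad-type argument of Theorem \ref{thmLBBru13}, whose logarithmic factor must be shown to match the one produced by the maximal-inequality and union-bound analysis of the LSE behind Theorem \ref{LSETheorem1}; that these two logarithmic factors coincide is exactly what makes the rate $\frac{\ln n}{n}$ sharp.
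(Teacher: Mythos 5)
Your proposal is correct and matches the paper exactly: the paper presents Theorem \ref{minimaxriskgen} as an immediate consequence of the corollary to Theorem \ref{LSETheorem1} (with $q=1$) for the upper bound and of Theorem \ref{thmLBBru13} for the lower bound, with no further argument required. Your remark that the Gaussian assumption is only needed for the lower bound is also consistent with the paper's presentation.
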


\subsubsection{On the class $\mathcal S(\mu)$}

In this section we shall combine both Theorems \ref{ThmChangePointUB} and \ref{LSETheorem1} to find the minimax rate on the class $\mathcal S(\mu)$. 
Let Model \eqref{Model} hold and let $G\in\mathcal S(\mu)$. First, we split the sample into two equal parts. 
Let $\mathcal D_0$ be the set of sample points with even indices, and $\mathcal D_1$ the set of sample points with odd indices. Note that $\mathcal D_0\cup\mathcal D_1$ is exactly the initial sample, that these two subsamples are independent, and that each of them has at least $(n-1)/2$ ordered design points. Let $\hat G_n$ be the LSE estimator of $G$ given in Theorem \ref{LSETheorem1}, computed from the subsample $\mathcal D_0$, and let $\hat m_n$ be the middle point of $\hat G_n$. As it will be shown in the proof of the next theorem, $\hat m_n$ satisfies both following properties, with high probability:
\begin{enumerate}
	\item $\hat m_n\in G$,
	\item $\mu/4\leq\hat m_n\leq 1-\mu/4$.
\end{enumerate}

Then, each endpoint of $G$ is estimated separately, using the same technique as in Section \ref{Sec:OneChangePoint}.

Let $I_1$ be the set of odd integers between $1$ and $n$, so $\mathcal D_1=\{(X_i,Y_i):i\in I_1\}$. Define $I_1^+=\{i\in I_1:X_i\geq\hat m_n\}$ and $I_1^-=\{i\in I_1:X_i<\hat m_n\}$. Then, for all $i\in I_1^+$, $Y_i=\mathds 1(X_i\leq b)+\xi_i$ and for all $i\in I_1^-$, $Y_i=\mathds 1(X_i\geq a)+\xi_i=\mathds 1(1-X_i\leq 1-a)+\xi_i$, where $a$ and $b$ are the left and right boundaries of $G$, respectively. Let $\DS F_+(M)=\sum_{\substack{i\in I_1^+ \\ i\leq M}}(2Y_i-1)$ and $\DS F_-(M)=\sum_{\substack{i\in I_1^- \\ i\geq M}}(2Y_i-1)$. If $I_1^+=\emptyset$, let $\tilde M^+=n$, otherwise, let $\tilde M^+\in \underset{M\in I_1^+}{\operatorname{ArgMax}} \text{ } F_+(M)$. Similarly, if $I_1^+=\emptyset$, let $\tilde M^-=1$, otherwise, let $\hat M^-\in \underset{M\in I_1^-}{\operatorname{ArgMax}} \text{ } F_-(M)$. 
Finally, set $\tilde G_n=[\tilde a,\tilde b]$, where $\tilde a=X_{\hat M^-}$ and $\tilde b=X_{\hat M^+}$.

\paragraph{Under the design (DD)}

By adapting Theorem \ref{ThmChangePointUB}, we know that under the design (DD), both edge points of $G$ can be estimated at the speed $1/n$, up to multiplicative constants, when the two properties above are satisfied by $\hat m_n$. We get the following theorem.

\begin{theorem}\label{theoremMu}
		Consider Model \eqref{Model}, with design (DD). There exists an estimator $\tilde G_n$ of $G$, such that
				\begin{equation*}
					\sup_{G\in\mathcal S(\mu)}\PP_G\left[|\tilde G_n\triangle G|\geq \frac{x}{n}\right] \leq 2C_0e^{-\mu x/(256\sigma^2)}+C_1n^4e^{-C_2\mu n/2}, \forall x>0,
				\end{equation*}
		for $n$ large enough. The positive constants $C_0$ and $C_2$ are the same as in Theorems \ref{ThmChangePointUB} and \ref{LSETheorem1}.
\end{theorem}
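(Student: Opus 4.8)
The plan is to decompose the event that $\tilde G_n$ is far from $G$ into a ``localization'' failure and two ``endpoint estimation'' failures, and to control each piece separately. First I would define the good event $E$ on which the middle point $\hat m_n$ of the preliminary estimator $\hat G_n$ (computed from $\mathcal D_0$) satisfies both properties stated before the theorem, namely $\hat m_n\in G$ and $\mu/4\le\hat m_n\le 1-\mu/4$. Using Theorem \ref{LSETheorem1} applied to the subsample $\mathcal D_0$, which contains at least $(n-1)/2$ design points, I would show that $|\hat G_n\triangle G|$ is smaller than $\mu/2$ (say) except on an event of probability at most $C_1 n^4 e^{-C_2\mu n/2}$. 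Since $|G|\ge\mu$, a symmetric-difference bound of order $\mu/2$ forces the midpoint of $\hat G_n$ to lie well inside $G$ and away from the boundary of $[0,1]$, which gives both required properties; this is where the factor $C_1 n^4 e^{-C_2\mu n/2}$ in the statement comes from, after plugging $x\asymp\mu n$ into the deviation bound of Theorem \ref{LSETheorem1}.

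Next, working on $E$, I would estimate the two endpoints separately using the independent subsample $\mathcal D_1$. The key observation is that once $\hat m_n\in G$, every design point in $\mathcal D_1$ to the right of $\hat m_n$ sees the one-sided change-point problem $Y_i=\mathds 1(X_i\le b)+\xi_i$, and every point to the left sees the reflected one-sided problem $Y_i=\mathds 1(1-X_i\le 1-a)+\xi_i$. Each of these is exactly the single change-point model of Section \ref{Sec:OneChangePoint}, with base point $\hat m_n$ in place of $0$, so Theorem \ref{ThmChangePointUB} applies to each. The subtlety is that the number of usable design points on each side is now random and, on $E$, is at least of order $\mu n$ rather than $n$; this rescaling of the effective sample size is precisely what turns the rate $x/n$ of Theorem \ref{ThmChangePointUB} into the rate with the extra $\mu$ factor, producing the exponent $\mu x/(256\sigma^2)$ and the leading constant $2C_0$ (one $C_0$ per endpoint).

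Conditioning on $E$ and on $\hat m_n$ (which is measurable with respect to $\mathcal D_0$ and hence independent of $\mathcal D_1$), I would bound
\[
\PP_G\!\left[|\tilde a-a|\ge\tfrac{x}{2n}\right]+\PP_G\!\left[|\tilde b-b|\ge\tfrac{x}{2n}\right]\le 2C_0 e^{-\mu x/(256\sigma^2)},
\]
by applying Theorem \ref{ThmChangePointUB} to each one-sided subproblem with the effective design size bounded below by a constant times $\mu n$ on $E$. Because $|\tilde G_n\triangle G|\le|\tilde a-a|+|\tilde b-b|$, a union bound over the two endpoints together with the bound $\PP_G[E^c]\le C_1 n^4 e^{-C_2\mu n/2}$ yields the stated inequality. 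The main obstacle will be making the independence and conditioning argument fully rigorous: I must verify that, after conditioning on the $\mathcal D_0$-measurable quantity $\hat m_n$ and restricting to $E$, each one-sided family of observations in $\mathcal D_1$ genuinely forms a clean single change-point model to which Theorem \ref{ThmChangePointUB} applies verbatim, and that the random number of points on each side can be replaced by its deterministic lower bound of order $\mu n$ without loss. Care is also needed because Theorem \ref{ThmChangePointUB} is stated for the deterministic design on $\{1/n,\dots,1\}$, so I must check that the odd-indexed subdesign, after localization, inherits the regular spacing (up to the factor $\mu$) that the one-dimensional argument requires.
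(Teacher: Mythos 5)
Your proposal is correct and follows essentially the same route as the paper: define the good event $E=\{|\hat G_n\triangle G|\le\mu/2\}$ via Theorem \ref{LSETheorem1} on $\mathcal D_0$, deduce that $\hat m_n\in G$ with $\min(\hat m_n,1-\hat m_n)\ge\mu/4$, and then apply Theorem \ref{ThmChangePointUB} to each one-sided subproblem on $\mathcal D_1$ with effective (regular, deterministic) design size at least $\mu n/16$, which is exactly how the paper obtains the exponent $\mu x/(256\sigma^2)$ and the factor $2C_0$ before adding $\PP_G[\bar E]$.
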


Naturally, Theorem \ref{theoremMu} yields next corollary.

\begin{corollary}\label{corollCP}
	Let the assumptions of Theorem \ref{theoremMu} be satisfied. Then, for all $q>0$, there exists a positive constant $B'_q$ which depends on $q$, $\mu$ and $\sigma$ only, such that
	\begin{equation*}
		\sup_{G\in\mathcal S(\mu)}\E_G\left[|\tilde G_n\triangle G|^q\right] \leq \frac{B'_q}{n^q}.
	\end{equation*}
\end{corollary}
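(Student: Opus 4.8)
The plan is to derive Corollary \ref{corollCP} from the tail bound in Theorem \ref{theoremMu} by a routine application of Fubini's theorem, exactly as in the corollary to Theorem \ref{ThmChangePointUB}. The key observation is that for any nonnegative random variable $Z=|\tilde G_n\triangle G|$ and any $q>0$, one has $\E_G[Z^q]=\int_0^\infty \PP_G[Z^q\geq t]\diff t = \int_0^\infty q s^{q-1}\PP_G[Z\geq s]\diff s$. After the change of variable $s=x/n$, this becomes $\E_G[Z^q]=\frac{q}{n^q}\int_0^\infty x^{q-1}\PP_G\left[Z\geq \frac{x}{n}\right]\diff x$, so it suffices to bound $\int_0^\infty x^{q-1}\sup_{G\in\mathcal S(\mu)}\PP_G[Z\geq x/n]\diff x$ by a constant depending on $q$, $\mu$, and $\sigma$ only.

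First I would substitute the bound from Theorem \ref{theoremMu}, splitting the integral according to its two terms. For the first term, $2C_0\int_0^\infty x^{q-1}e^{-\mu x/(256\sigma^2)}\diff x$ is, up to the constant $2C_0$, a Gamma integral: it equals $2C_0\,\Gamma(q)\left(256\sigma^2/\mu\right)^{q}$, which is finite and depends only on $q$, $\mu$, and $\sigma$. For the second term, the factor $C_1 n^4 e^{-C_2\mu n/2}$ does not depend on $x$, so integrating $x^{q-1}$ against it over all of $[0,\infty)$ diverges; this is the one point requiring care.

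The resolution is standard and mirrors the earlier corollary: the tail probability is trivially bounded by $1$, and the symmetric difference $|\tilde G_n\triangle G|$ is at most $1$ almost surely since both sets lie in $[0,1]$. Hence I would only invoke the Theorem \ref{theoremMu} bound for $x$ in a range where it is informative and bound the probability by $1$ otherwise, or equivalently split $\int_0^\infty = \int_0^{n} + \int_{n}^\infty$; on $[n,\infty)$ the event $\{Z\geq x/n\}$ has probability zero since $Z\le 1$, killing the divergent tail, while on $[0,n]$ the constant second term contributes at most $C_1 n^4 e^{-C_2\mu n/2}\cdot\frac{n^q}{q}$, which after the $n^{-q}$ prefactor leaves $C_1 n^{4} e^{-C_2\mu n/2}$ up to constants. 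This bound tends to $0$ as $n\to\infty$, hence is bounded by a constant depending on $\mu$ uniformly in $n$.

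The main obstacle, such as it is, is purely bookkeeping: ensuring the $n$-dependent second term does not spoil the clean $n^{-q}$ rate. Since $e^{-C_2\mu n/2}$ decays faster than any polynomial in $n$, the product $n^4 e^{-C_2\mu n/2}$ is bounded uniformly over $n\geq 1$, so it can be absorbed into the constant $B'_q$. Collecting the Gamma contribution and this exponentially small contribution, I would set $B'_q$ equal to $q$ times the sum of $2C_0\,\Gamma(q)(256\sigma^2/\mu)^{q}$ and $\sup_{n\ge 1} \frac{C_1}{q} n^{4}e^{-C_2\mu n/2}$, obtaining $\sup_{G\in\mathcal S(\mu)}\E_G[|\tilde G_n\triangle G|^q]\leq B'_q/n^q$, as claimed.
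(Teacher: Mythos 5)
Your argument is correct and is exactly the route the paper intends (it gives no explicit proof, deriving the corollary from Theorem \ref{theoremMu} by the same Fubini/tail-integration device used for the corollary to Theorem \ref{ThmChangePointUB}): integrate $q s^{q-1}\PP_G[|\tilde G_n\triangle G|\geq s]$, cut the integral at $s=1$ using $|\tilde G_n\triangle G|\leq 1$, evaluate the Gamma integral for the first term, and absorb the $x$-independent term via super-polynomial decay of $e^{-C_2\mu n/2}$. The only nitpick is in your final constant: since the residual contribution to $\E_G[|\tilde G_n\triangle G|^q]$ is $C_1 n^4 e^{-C_2\mu n/2}$ \emph{without} the $n^{-q}$ prefactor, you should take $\sup_{n\geq 1} n^{q+4}e^{-C_2\mu n/2}$ (not $\sup_{n\geq 1} n^{4}e^{-C_2\mu n/2}$) in the definition of $B'_q$ (and, strictly, handle the finitely many $n$ below the ``$n$ large enough'' threshold of Theorem \ref{theoremMu} by the trivial bound); both are cosmetic.
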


This corollary, for $q=1$, shows that the minimax risk on the class $\mathcal S(\mu)$ is bounded from above by $1/n$, up to a multiplicative constant.

 \paragraph{Under the design (RD)}
 
 When the random design (RD) is considered, we prove the following inequality, which is less strong than the deviation inequality obtained for the deterministic design, and yet enough for our purposes.

 \begin{theorem}\label{theoremMuRD}
		Let $n\geq 1$ and consider Model \eqref{Model}, with design (RD). There exist positive constants $c_j$, $j=1,\ldots,5$, that depend on $\sigma^2$ and $\mu$ only, such that the following holds.
		
\begin{equation*}
	\sup_{G\in\mathcal S(\mu)}\PP_G\left[|\tilde G_n\triangle G|\geq \frac{x+c_1}{n}\right] \leq c_2n^4e^{-c_3n}+c_4e^{-c_5\sqrt x},
\end{equation*}
for all $x\geq 0$.
\end{theorem}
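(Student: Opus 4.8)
\emph{Proof sketch.} The plan is to mirror the two-stage scheme behind Theorem \ref{theoremMu}, replacing each deterministic-design ingredient by its random-design analogue and absorbing the resulting weaker tail. Throughout I condition on the first subsample $\mathcal D_0$, which is independent of $\mathcal D_1$ and determines $\hat G_n$ and its midpoint $\hat m_n$. I first control the localization step. Write $G=[a,b]$ and $m=(a+b)/2$; since $|G|\geq\mu$ one has $m\in[\mu/2,1-\mu/2]$ and $m$ lies at distance at least $\mu/2$ from each endpoint of $G$. Whenever $|\hat G_n\triangle G|<\mu\leq|G|$ the two segments must overlap, so $|\hat G_n\triangle G|$ equals the sum of the two endpoint gaps and hence $|\hat m_n-m|\leq\frac12|\hat G_n\triangle G|$. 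Therefore the good event $E_1=\{\hat m_n\in G,\ \mu/4\leq\hat m_n\leq1-\mu/4\}$ contains $\{|\hat G_n\triangle G|<\mu/2\}$. Applying Theorem \ref{LSETheorem1} to $\mathcal D_0$, which carries $n'\geq(n-1)/2$ points, with the threshold $\mu/2=\frac{4\ln n'}{C_2 n'}+\frac{y}{n'}$, i.e. $y=n'\mu/2-\frac{4\ln n'}{C_2}$, gives $\PP[E_1^c]\leq C_1 (n')^4 e^{-C_2\mu n'/2}$; the factor $(n')^4$ is produced exactly by the centering term $4\ln n'/(C_2 n')$, and this is of the announced shape $c_2 n^4 e^{-c_3 n}$.

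Next, on $E_1$ and conditionally on $\mathcal D_0$, the estimation of $b$ and of $a$ decouples into two one-change-point problems driven by the independent sample $\mathcal D_1$. For the right endpoint the points of $I_1^+$ lie in $[\hat m_n,1]$, on which $Y_i=\mathds 1(X_i\leq b)+\xi_i$ because $\hat m_n\geq a$ on $E_1$; given their number $N^+$ they are i.i.d. uniform on $[\hat m_n,1]$. Rescaling $[\hat m_n,1]$, of length $L^+=1-\hat m_n\geq\mu/4$, onto $[0,1]$ turns this into precisely the model of Theorem \ref{ThmCPRD} with $N^+$ observations and change-point $\theta^+=(b-\hat m_n)/L^+$, and $\tilde b=X_{\hat M^+}$ is the corresponding least-square estimator. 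The left endpoint is handled by the analogous reflected argument on $[0,\hat m_n]$, of length $L^-=\hat m_n\geq\mu/4$, with $N^-$ points.

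The key random-design step is to convert the local rate $L^\pm/N^\pm$ into the global rate $1/n$. Since $N^+\sim\mathrm{Bin}(|I_1|,L^+)$ with $L^+\geq\mu/4$ and $|I_1|\geq(n-1)/2$, its mean is at least $(n-1)\mu/16$, so a multiplicative Chernoff bound gives $N^+\geq|I_1|L^+/2$ off a conditional event of probability at most $e^{-cn}$, and on that event $L^+/N^+\leq2/|I_1|\leq8/n$ for $n\geq2$. Feeding this into the rescaled deviation inequality of Theorem \ref{ThmCPRD} yields, conditionally on $\mathcal D_0$ and on $E_1$, $\PP[|\tilde b-b|\geq\frac{8\sigma^2}{n}(x+A_1\sigma^2)\mid\mathcal D_0]\leq e^{-x/(A_2\sqrt x+A_3\sigma^2)}+e^{-cn}$, and symmetrically for $|\tilde a-a|$.

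Finally I assemble the pieces. On $E_1$ both $\tilde a,a$ lie to the left of $\hat m_n$ and both $\tilde b,b$ to its right, so the symmetric difference splits as $|\tilde G_n\triangle G|=|\tilde a-a|+|\tilde b-b|$. Allocating the budget $\frac{x+c_1}{n}$ evenly between the two endpoints, choosing $c_1$ (a multiple of $A_1\sigma^4$) to cancel the $A_1\sigma^2$ offsets, taking expectation over $\mathcal D_0$ (the conditional bounds are uniform on $E_1$), and union-bounding over the two sides, I get the $\PP[E_1^c]$ and two $e^{-cn}$ terms merging into $c_2 n^4 e^{-c_3 n}$, while the two sub-exponential tails combine via $\frac{x}{A_2\sqrt x+A_3\sigma^2}\geq c_5\sqrt x$ (for a suitable $c_5$, treating bounded $x$ trivially) into $c_4 e^{-c_5\sqrt x}$, which is the claimed inequality. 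The main obstacle, and the feature that distinguishes this from Theorem \ref{theoremMu}, is the randomness of the counts $N^\pm$: unlike in the design (DD), the number of points on each side of $\hat m_n$ is random, and showing that $L^\pm/N^\pm$ collapses uniformly to $1/n$ forces the Binomial concentration step; propagated through the weaker tail of Theorem \ref{ThmCPRD}, this is exactly what produces the $\sqrt x$ in the exponent. $\qed$
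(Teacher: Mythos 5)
Your proposal is correct and follows essentially the same route as the paper: the same localization event $E=\{|\hat G_n\triangle G|\le\mu/2\}$ controlled via Theorem \ref{LSETheorem1}, the same split of $|\tilde G_n\triangle G|$ into the two endpoint errors, the same conditioning on $\mathcal D_0$ and on the (binomial) counts $\#I_1^{\pm}$ with a concentration bound absorbing the small-count event into $c_2n^4e^{-c_3n}$, and the same use of the deviation inequality of Theorem \ref{ThmCPRD} to produce the $e^{-c_5\sqrt x}$ tail. The only cosmetic difference is that you phrase the count control as a single Chernoff event while the paper sums over all values of $k$ and splits the sum at $k=\lambda n$.
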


As a consequence, we have the following moment inequalities.
 
 \begin{corollary}\label{corollCPRD}
	Let the assumptions of Theorem \ref{theoremMuRD} hold. Then, for all $q>0$, there exists a positive constant $B'_q$ which depends on $q$, $\mu$ and $\sigma$ only, such that
	\begin{equation*}
		\sup_{G\in\mathcal S(\mu)}\E_G\left[|\tilde G_n\triangle G|^q\right] \leq \frac{B'_q}{n^q}.
	\end{equation*}
\end{corollary}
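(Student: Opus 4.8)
The plan is to deduce Corollary~\ref{corollCPRD} from the deviation inequality in Theorem~\ref{theoremMuRD} by the same Fubini/integration-of-the-tail argument that produced the earlier corollaries. Writing $Z_n = n|\tilde G_n\triangle G|$, Theorem~\ref{theoremMuRD} says that for every $x\geq 0$,
\begin{equation*}
	\sup_{G\in\mathcal S(\mu)}\PP_G\left[Z_n\geq x+c_1\right] \leq c_2n^4e^{-c_3n}+c_4e^{-c_5\sqrt x}.
\end{equation*}
Since the bound holds uniformly over $G\in\mathcal S(\mu)$, it suffices to bound $\E_G[Z_n^q]$ for a fixed $G$ and then take the supremum; the resulting constant will not depend on $G$. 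First I would use the identity $\E_G[Z_n^q]=\int_0^\infty \PP_G[Z_n\geq t]\,qt^{q-1}\diff t$, split the integral at $t=c_1$, and substitute $t=x+c_1$ on the upper piece. On $[0,c_1]$ the integrand is at most $qt^{q-1}$, contributing a constant $c_1^q$ that depends only on $c_1$ (hence on $q,\mu,\sigma$).

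The main work is the upper tail. There the integrand is bounded by $q(x+c_1)^{q-1}\left(c_2n^4e^{-c_3n}+c_4e^{-c_5\sqrt x}\right)$. I would treat the two summands separately. The term $c_2n^4e^{-c_3n}$ is constant in $x$, so after multiplying by $q(x+c_1)^{q-1}$ it integrates against a polynomial in $x$; but this diverges, so the honest move is to recall that $Z_n\leq n$ almost surely (the Nykodim distance is at most $1$), hence the integration variable $t$ never exceeds $n$ and $Z_n^q\leq n^q$. Thus the contribution of the $n^4e^{-c_3n}$ term to $\E_G[Z_n^q]$ is at most $n^q\cdot c_2 n^4 e^{-c_3 n}=c_2 n^{q+4}e^{-c_3 n}$, which is bounded (indeed $o(1)$) uniformly in $n$ since exponential decay dominates any polynomial factor; I would absorb its supremum over $n$ into the constant. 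For the second summand, $\int_0^\infty q(x+c_1)^{q-1}c_4 e^{-c_5\sqrt x}\diff x$ is a convergent integral whose value depends only on $q,c_1,c_4,c_5$, i.e.\ only on $q,\mu,\sigma$; the substitution $u=\sqrt x$ turns it into $\int_0^\infty q(u^2+c_1)^{q-1}c_4 e^{-c_5 u}\,2u\,\diff u$, a finite Gamma-type integral.

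Collecting the three pieces gives $\E_G[Z_n^q]\leq B'_q$ for a constant depending only on $q,\mu,\sigma$, and since this holds for every $G\in\mathcal S(\mu)$ uniformly, taking the supremum yields
\begin{equation*}
	\sup_{G\in\mathcal S(\mu)}\E_G\left[|\tilde G_n\triangle G|^q\right]=\frac{1}{n^q}\sup_{G\in\mathcal S(\mu)}\E_G[Z_n^q]\leq \frac{B'_q}{n^q},
\end{equation*}
which is the claim. The only subtlety, and the step I would flag as the main obstacle, is the $n^4e^{-c_3 n}$ term: naively integrating it against $t^{q-1}$ over all of $[0,\infty)$ diverges, so one must exploit the deterministic bound $|\tilde G_n\triangle G|\leq 1$ to cap the integration range and turn the $n^4$ into a harmless $n^{q+4}e^{-c_3n}$ that is bounded in $n$. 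Everything else is routine tail integration.
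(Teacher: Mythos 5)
Your argument is correct and is essentially the paper's intended proof: the paper derives this corollary from Theorem \ref{theoremMuRD} by the same Fubini/tail-integration argument it invokes for the earlier corollaries, and you correctly handle the one genuine subtlety (the $x$-independent term $c_2n^4e^{-c_3n}$) by using the deterministic bound $|\tilde G_n\triangle G|\leq 1$ to cap the integration range, so that this term contributes at most $c_2n^{q+4}e^{-c_3n}$, which is bounded uniformly in $n$. Nothing to add.
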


As in the case of the deterministic design, this corollary shows that the minimax risk on the class $\mathcal S(\mu)$ is bounded from above by $1/n$, up to a multiplicative constant.

 \vspace{8mm}
         A very similar proof to that of Theorem \ref{ThmChangePointLB} yields a lower bound for this minimax risk, and we get the next theorem.

\begin{theorem}\label{minimaxlarge}
		Consider Model \eqref{Model}, with design (DD) or (RD). The minimax risk on the class $\mathcal S(\mu)$ satisfies, asymptotically,
		\begin{equation*}
			\mathcal R_n(\mathcal S(\mu))\asymp_{\mu,\sigma} \frac{1}{n}.
		\end{equation*}
\end{theorem}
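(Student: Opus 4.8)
The plan is to prove the two-sided bound $\mathcal R_n(\mathcal S(\mu))\asymp_{\mu,\sigma} 1/n$ by combining an upper bound and a matching lower bound, each of which is essentially already in hand from earlier results in the paper. For the \textbf{upper bound}, I would simply invoke Corollary \ref{corollCP} (design (DD)) or Corollary \ref{corollCPRD} (design (RD)) with $q=1$: each gives a constant $B_1'$ depending on $\mu$ and $\sigma$ such that $\sup_{G\in\mathcal S(\mu)}\E_G[|\tilde G_n\triangle G|]\le B_1'/n$. Since $\mathcal R_n(\mathcal S(\mu))$ is an infimum over all estimators of the corresponding supremum risk, the two-step estimator $\tilde G_n$ witnesses the inequality $\mathcal R_n(\mathcal S(\mu))\le B_1'/n$, giving the $O(1/n)$ side under both designs.

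For the \textbf{lower bound}, the statement of the theorem says it follows from a proof ``very similar to that of Theorem \ref{ThmChangePointLB}'', so I would reduce to the two-point (Le Cam) argument used there. The idea is to exhibit two segments $G_0,G_1\in\mathcal S(\mu)$ that are close in Nykodim distance — with $|G_0\triangle G_1|$ of order $1/n$ — yet whose induced laws $\PP_{G_0}$ and $\PP_{G_1}$ on the observations are statistically indistinguishable, in the sense that their total variation (or Hellinger) distance stays bounded away from $1$. Concretely, I would take two segments of length at least $\mu$ sitting in the interior of $[0,1]$ (so that both genuinely belong to $\mathcal S(\mu)$) that differ only near one endpoint by moving that endpoint across at most one design point, so that the symmetric difference contains at most one sample point. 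Under design (DD) this is a shift of the endpoint by $1/n$, giving $|G_0\triangle G_1|=1/n$; under design (RD) the same reasoning applies to the reordered uniform points, with the typical gap between consecutive order statistics being of order $1/n$. Then a standard Le Cam bound, $\mathcal R_n(\mathcal S(\mu))\ge \tfrac12|G_0\triangle G_1|\,(1-\mathrm{TV}(\PP_{G_0},\PP_{G_1}))$, yields the $c(\mu,\sigma)/n$ lower bound once the total variation is controlled.

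The \textbf{main obstacle} is bounding the total variation (equivalently, the Kullback--Leibler divergence) between $\PP_{G_0}$ and $\PP_{G_1}$. If the two segments differ by exactly one design point, then $\PP_{G_0}$ and $\PP_{G_1}$ differ only in the marginal law of the single label $Y_i$ attached to that point: under $G_0$ that label has mean $1$ (or $0$) and under $G_1$ it has mean $0$ (or $1$). For Gaussian noise of variance $\sigma^2$, the KL divergence between $\mathcal N(0,\sigma^2)$ and $\mathcal N(1,\sigma^2)$ is $1/(2\sigma^2)$, a constant independent of $n$; by Pinsker's inequality this forces $\mathrm{TV}(\PP_{G_0},\PP_{G_1})\le\sqrt{1/(4\sigma^2)}$, which is bounded away from $1$ for $\sigma$ not too small, and the generic subgaussian case is handled by the same change-of-measure estimate. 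The delicacy is purely in the design-dependent bookkeeping: under (DD) the construction is exact and the endpoint shift is deterministic, while under (RD) one must argue that with constant probability there is a design point whose removal/addition changes the segment by a length of order $1/n$, and then average the Le Cam bound over the randomness of the design. I would emphasize that the role of the constraint $|G|\ge\mu$ here is only to guarantee that both competing segments lie in the class $\mathcal S(\mu)$ and stay away from the boundary; it does not affect the rate, which is why the answer is $1/n$ rather than $(\ln n)/n$ as on the full class $\mathcal S$.
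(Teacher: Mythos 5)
Your overall architecture is the right one and matches the paper's: the upper bound is exactly Corollary \ref{corollCP} (resp.\ \ref{corollCPRD}) with $q=1$, and the lower bound is a two-point argument modelled on Theorem \ref{ThmChangePointLB}. The gap is in how you execute the two-point argument. You place one design point inside $G_0\triangle G_1$ and then control $\mathrm{TV}(\PP_{G_0},\PP_{G_1})$ via the Gaussian KL divergence $1/(2\sigma^2)$ and Pinsker. Two problems. First, Pinsker gives $\mathrm{TV}\le 1/(2\sigma)$, which is vacuous whenever $\sigma<1/2$; since the theorem's constant is allowed to depend on $\sigma$ you could repair this for Gaussian noise by computing the total variation between $\mathcal N(0,\sigma^2)$ and $\mathcal N(1,\sigma^2)$ directly ($=2\Phi(1/(2\sigma))-1<1$), but as written the step fails for small $\sigma$. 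Second, and more seriously, Theorem \ref{minimaxlarge} (like Theorem \ref{ThmChangePointLB}, and unlike Theorem \ref{thmLBBru13}) is stated for general subgaussian noise, not Gaussian noise, and your claim that ``the generic subgaussian case is handled by the same change-of-measure estimate'' is false: if the noise is, say, uniform on $[-1/4,1/4]$ (which is subgaussian), then the label attached to the single design point in $G_0\triangle G_1$ identifies the true hypothesis with probability one, $\mathrm{TV}(\PP_{G_0},\PP_{G_1})=1$, and your Le Cam bound degenerates to zero.

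The fix is precisely what the paper's proof of Theorem \ref{ThmChangePointLB} does, and what the remark before Theorem \ref{minimaxlarge} is alluding to: choose the two segments so that $G_0\triangle G_1$ contains \emph{no} design point. For (DD), fix $G_0=[a,a+\mu]\in\mathcal S(\mu)$ and $G_1=[a,a+\mu+\epsilon]$ with $\epsilon\asymp 1/n$ chosen so that the interval $(a+\mu,a+\mu+\epsilon]$ avoids the grid $\{i/n\}$; then $\PP_{G_0}=\PP_{G_1}$ identically, and the triangle inequality gives $\mathcal R_n(\mathcal S(\mu))\ge \epsilon/2$ with no distributional assumption at all. For (RD), restrict to the event that no $X_i$ falls in $(a+\mu,a+\mu+1/(2n)]$, whose probability is $(1-1/(2n))^n\ge 1/2$ for all $n\ge1$, and note that the two conditional laws coincide on that event, exactly as in the random-design half of the proof of Theorem \ref{ThmChangePointLB}. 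Your closing observation that the constraint $|G|\ge\mu$ only serves to keep both competing segments in the class, and does not affect the $1/n$ rate, is correct and is indeed the point of the theorem.
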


\begin{remark}
	Note that, in Theorem \ref{theoremMu}, the upper bound contains one residual term which does not depend on $x$. This term, in order to be sufficiently small and to still yield Corollary \ref{corollCP} for $q=1$, requires that $\mu$ - if allowed to depend on $n$ - is of order larger than $(\ln n)/n$. This is consistent with Theorem \ref{TestThm} which shows that $(\ln n)/n$ is the order of magnitude of the length of the shortest segment that can be detected consistently, in a minimax sense. If $\mu$ becomes too small, i.e., of order less or equal to $\ln(n)/n$, then the lower bound proved for the minimax risk on $\mathcal S(\mu)$ will break down to $(\ln n)/n$ and the proof of the lower bound in Theorem \ref{thmLBBru13} can be applied to $\mathcal S(\mu)$, which would yield $\mathcal R_n(\mathcal S(\mu))\asymp_{\sigma} \mathcal R_n(\mathcal S)\asymp_{\sigma}\frac{\ln n}{n}$ as expected.
\end{remark}

\begin{remark}
We have only considered uniform or regular designs. However, the rates would be deteriorated if we allow densities that can get arbitrarily close to zero. For instance, as in \cite{Gaiffas2005}, if the density $f$ of the design points satisfies $f(x)\sim x^\beta$ as $x\to 0$, where $\beta>0$, then adapting the proofs yields lower bounds of the order of $n^{-1/(\beta+1)}$ for the minimax estimation rates on the three classes $\mathcal S_0$, $\mathcal S$ and $\mathcal S(\mu)$, which is strictly slower than $(\ln n)/n$.
\end{remark}

\section{Conclusion and discussion}\label{ConclusionDiscussion}

We summarize our results in Table \ref{Table1}. The rates that are written in this table hold when the noise is Gaussian. \\

\begin{table}[h]
\centering
\begin{tabular}{|c|c|c|c|}
\hline
& $\mathcal S$ & $\mathcal S_0$ & $\mathcal S(\mu)$ \\
\hline
Minimax rate for estimation & $\ln(n)/n$ & $1/n$ & $1/n$ \\
\hline
Separation rate for detection & $\ln(n)/n$ & $1/n$ & $\cdot$ \\
\hline
\end{tabular}
\caption{Minimax risks and separation rates for the classes $\mathcal S$, $\mathcal S_0$ and $\mathcal S(\mu)$.}
\label{Table1}
\end{table}

We have shown that asymptotically, a segment can be estimated infinitely faster when it is \textit{a priori} supposed either to contain a given point (here, $0$), or to be large enough. In terms of the change-point problem, the change-points can be detected and their location estimated at the fast rate $1/n$ either when one is known (i.e., there is only one unknown change-point), or when they are separated from one another. In particular, in the case of a single change-point, we show that the assumption that it is separated away from the boundaries of the domain, which is always imposed in change-point estimation (e.g., \cite{Ibragimov1975,Ibragimov1984,Korostelev1986,KTlectureNotes1993}) is only technical, not fundamental. When there are two change points that are not separated away from one another, we showed the presence of an extra logarithmic factor in both the minimax rate for estimation and separation rate for detection. As we mentioned in the introduction, \cite{ChanWalther2013} prove a similar result for detection in the case of two change-points that are not assumed to be separated from each other. However, in their work, the amplitude of the jump is not known. In our results, we have proved that even if the amplitude is known (it is 1 in our case), the logarithmic factor is unavoidable. %When there is only one change-point, and when it is not necessarily bounded away from 0 and 1, we proved that there is no logarithmic factor in optimal estimation and detection. However, we assumed that the amplitude of the jump was known. Actually, one can easily prove the following result, where the amplitude needs not be known.
%
%\begin{theorem} \label{ThmConcl}
%	Let \eqref{Model2} hold: 
%	$$Y_i=\delta \mathds 1(X_i\leq\theta)+\xi_i, \quad i=1,\ldots,n,$$
%	where $\theta\in [0,1]$ and $\delta>0$ are both unknown, $X_i=i/n$ and $\xi_i$ are i.i.d. Gaussian random variables with mean zero and variance $\sigma^2>0$. Consider the hypotheses
%	$$
%\begin{cases}
%	H_0 : G=\emptyset\\
%	H_1 : |G|\geq h_n \mbox{ and } \delta\geq\delta_n.
%\end{cases}
%$$
%Define the test $T_n=\mathds 1(S>N/2)$, where $N=nh_n$ and $S=\#\{i\leq N: Y_i\leq \delta_n/2\}$.
%If $nh_n\delta_n^2\to\infty$, then the test $T_n$ is consistent.
%\end{theorem} 
%
%In particular, even if the amplitude $\delta$ of the jump is unknown, detection of a single change-point can be made at no cost of an extra logarithmic factor.
%

Detection is a simpler problem than estimation. Hence, studying detection gives a benchmark for the optimal rates in estimation. Here, we did inference on segments, which are parametric objects, and we showed that detection and estimation are asymptotically equivalent, in the sense that in each scenario that we studied, separation rates for detection and minimax rates for estimation coincide. Because of the sharp jumps of the indicator function $\mathds 1(x\in G)$ at the boundaries of $G$, one would expect the parametric object $G$ to be estimable at the rate $n^{-1}$. However, we showed that in general, if $G$ can be arbitrarily small, hence, hard to detect, the rate $n^{-1}$ becomes negligible compared with the detection separated rate $(\ln n)/n$, which becomes the optimal estimation rate. If the detection threshold is at most of order $n^{-1}$, as it is the case in the class $\mathcal S_0$ or, obviously, in the class $\mathcal S(\mu)$, then $n^{-1}$ is the optimal estimation rate, as expected. 

In \cite{Brunel2013}, a higher dimensional version of the model is treated, where $G$ is a $d$-dimensional convex body ($d\geq 1$). In that case, $G$ is no longer a parametric object, and the optimal rate of convergence is affected by the complexity of the class of convex sets. Morally, the optimal rates of convergence are subsequent of a competition between separation rates for detectability and a terms that relates to the complexity of the class of sets $G$. In dimension 1, the complexity of the parametric class of segments yields a term of order $1/n$, which is never larger than the separation rate for detectability. In higher dimensions, \cite{Brunel2013} shows that the minimax rate of convergence for convex bodies is $n^{-2/(d+1)}$, although for the parametric class of convex polytopes with bounded number of vertices, it is exactly $(\ln n)/n$, which can be shown to be the separation rate. For the class of convex bodies, the term $n^{-2/(d+1)}$ dominates the separation rate, whereas for parametric classes of polytopes, the separation rate $(\ln n)/n$ dominates $1/n$ that is yielded by the complexity of the class.

In \cite{Brunel2013}, there are no assumptions of the type of Assumptions \ref{A1} or \ref{A2} and $G$ is estimated without any restriction on its size or location. We believe that if one restricts $G$ to having volume at least $\mu$, for some $\mu>0$ (corresponding to the scenario \ref{A2} here), there would not be any term of order $(\ln n)/n$ in the minimax rates, and for parametric classes, the minimax rates for estimation of $G$ would become $1/n$, i.e., the logarithmic factor would disappear. In a similar fashion, we believe that under a similar scenario as \ref{A1}, the separation rate would decrease to $1/n$ and the minimax rate for estimation would become $1/n$ as well for parametric classes (such as polytopes with bounded number of vertices). In higher dimensions, scenario \ref{A1} would have to be adapted. Indeed, one can easily modify the proof of the lower bound in \cite[Theorem 2]{Brunel2013} to show that the minimax rate is still at least $(\ln n)/n$, even under the extra assumption that $0\in G$. In fact, Assumption \ref{A1} should take the following form in dimension $d\geq 2$:
\begin{condition}\label{AA1}
	$[0,1]^{d-1}\times\{0\}\subseteq G.$
\end{condition}

Particular sets satisfying this assumption are boundary fragment (see \cite{KorostelevTsybakov1992} and \cite[Chap 3]{KTlectureNotes1993}). A boundary fragment in $\R^d$ ($d\geq 2$) is a set of the form 
$$G=\{(x,y)\in [0,1]^{d-1}\times \R: 0\leq y\leq g(x)\},$$
for some nonnegative function $g:[0,1]^{d-1}\to\R$, called the \textit{edge function} of $G$. As a side remark, in \cite{KorostelevTsybakov1992} and \cite[Chap 3]{KTlectureNotes1993}, the edge function $g$ is assumed to satisfy $h\leq g(x)\leq 1-h, \forall x\in [0,1]^{d-1}$, for some $h>0$ and we believe that, similarly to what we have proven here in dimension one, this assumption is only technical. A particular case of polytopes with bounded number of faces that satisfy Assumption \ref{AA1} is boundary fragments with edge function of the form $\displaystyle{g=\min_{1\leq k\leq K}f_k}$, where the $f_k$'s are affine maps on $[0,1]^{d-1}$ and $K$ is some fixed positive integer. We believe that the correct minimax rate of convergence on such a parametric class is $1/n$, although the only known upper bound at the moment is of order $(\ln n)/n$, see \cite{Brunel2013}.

\section{Proofs} \label{Sec:Proofs}

\subsection{Proof of Theorem \ref{TestThm}}

\paragraph{On the class $\mathcal S_0$}

\subparagraph{Upper bound}
	Let us first prove the upper bound, i.e. assume that $nh\rightarrow\infty$, and prove that there exists a consistent test. Recall that $N=\max\{i=1,\ldots,n : X_i\leq h\}=\#\left(\mathcal X\cap [0,h]\right)$. If the design is (DD), then $N$ is just equal to the integer part of $nh$. If the design (RD), then $N$ is a binomial random variable, with parameters $n$ and $h$.
Let us show first that the error of the first kind of the test $T_n^0$ goes to zero, when $n\rightarrow\infty$.
\begin{align*}
	\PP_\emptyset\left[S\leq cN\right] & = \PP_\emptyset\left[\#\left\{i=1,\ldots,N :Y_i>\frac{1}{2}\right\}\geq(1-c)N\right] \\
	& \leq \E\left[\PP_\emptyset\left[\#\left\{i=1,\ldots,N : \xi_i>\frac{1}{2}\right\}\geq(1-c)N|\mathcal X\right]\right].
\end{align*}
Since the $\xi_i$'s are independent of $\mathcal X$, the distribution of $\#\{i=1,\ldots,N : \xi_i>\frac{1}{2}\}$ conditionally to $\mathcal X$ is binomial, with parameters $N$ and $\beta$, where $\beta =\PP\left[\xi_1>1/2\right]\in [0,1)$.
Thus, by Bernstein's inequality for binomial random variables, by defining $\displaystyle{\gamma=\frac{(1-c-\beta)^2}{2\beta(1-\beta)+(1-c-\beta)/3}>0}$,
\begin{equation*}
	\PP_\emptyset\left[S\leq cN\right] \leq \E\left[\exp\left(-\gamma N\right)\right].
\end{equation*}
If $\mathcal X$ satisfies (DD), then $N\geq nh-1$ and it is clear that $\PP_\emptyset\left[S\leq cN\right]\longrightarrow 0$.
If $\mathcal X$ satisfies (RD), then 
\begin{equation*}
	\E\left[\exp\left(-\gamma N\right)\right]=\exp\left(-nh\left(1-e^{-\gamma}\right)\right),
\end{equation*} 
so $\PP_\emptyset\left[S\leq cN\right]\longrightarrow 0$.

Let us show, now, that the error of the second kind goes to zero as well. Let $G\in\mathcal S_0$ satisfying the alternative hypothesis, i.e. $|G|\geq h$. Denote by $\beta'=\PP[\xi_1\leq -1/2]$, and by $\displaystyle{\gamma'=\frac{(c-\beta')^2}{2\beta'(1-\beta')+(c-\beta')/3}>0}$
\begin{align*}
	\PP_G\left[S>cN\right] & = \PP_G\left[\#\left\{i=1,\ldots,N : Y_i\leq\frac{1}{2}\right\}> c N\right] \\
	& \leq \E\left[\PP_\emptyset\left[\#\left\{i=1,\ldots,N : \xi_i\leq-\frac{1}{2}\right\}>cN|\mathcal X\right]\right] \\
	& \leq \E\left[\exp\left(-\gamma' N\right)\right],
\end{align*}
by a similar computation to that for the error of the first kind. Since the right-side of the last inequality does not depend on $G$, 
\begin{equation*}
	\sup_{|G|\geq h}\PP_G\left[S>cN\right] \leq \E\left[\exp\left(-\gamma' N\right)\right]
\end{equation*}
and therefore, by the same argument as for the error of the first kind, goes to zero when $n\rightarrow\infty$, for both designs (DD) and (RD).

\subparagraph{Lower bound}

Assume, now, that $nh\rightarrow 0$. Let $\tau_n$ be any test. Let $G_1=[0,h]$. We denote by $\mathcal H$ the Hellinger distance between probability measures. The following computation uses properties of this distance, which can be found in \cite{Tsybakov2009}. 
\begin{align}
	\gamma_n(\tau_n,\mathcal C) & \geq \E_\emptyset\left[\tau_n\right]+\E_{G_1}\left[1-\tau_n\right] \nonumber \\
	& \geq \int \min\left(d\PP_\emptyset,d\PP_{G_1}\right) \nonumber \\
	& \geq \frac{1}{2}\left(1-\frac{\mathcal H(\PP_\emptyset,\PP_{G_1})^2}{2}\right)^2. \label{LBTest1}
\end{align}
Let $G,G'\in\mathcal S$. A simple computation shows that, for the design (DD),
\begin{equation}\label{hellinger1}
	1-\frac{\mathcal H(\PP_G,\PP_{G'})^2}{2} = \exp\left(-\frac{\#\left(\mathcal X\cap (G\triangle G')\right)}{8\sigma^2}\right),
\end{equation}
and for the design (RD), 
\begin{equation}\label{hellinger2}
	1-\frac{\mathcal H(\PP_G,\PP_{G'})^2}{2} = \left(1-\left(1-e^{-\frac{1}{8\sigma^2}}\right)|G\triangle G'|\right)^n.
\end{equation}
In particular, for the design (DD), 
\begin{equation*}
	1-\frac{\mathcal H(\PP_\emptyset,\PP_{G_1})^2}{2} \geq \exp\left(-\frac{nh}{8\sigma^2}\right),
\end{equation*}
and for the design (RD),
\begin{equation*}
	1-\frac{\mathcal H(\PP_\emptyset,\PP_{G_1})^2}{2} = \left(1-\left(1-e^{-\frac{1}{8\sigma^2}}\right)h\right)^n.
\end{equation*}
In both cases, we showed that the right side of \eqref{LBTest1} tends to $1/2$, when $n\rightarrow\infty$. Therefore the test $\tau_n$ is not consistent.

\paragraph{On the class $\mathcal S$}

\subparagraph{Upper bound}

Assume that $\displaystyle{\frac{nh}{\ln n}\longrightarrow\infty}$. Let us first show that the error of the first kind of $T_n^1$ goes to zero, when $n\rightarrow\infty$. Recall that $T_n^1=\mathds 1(R\geq 0)$, where $R=\sup_{|G|\geq h}R(G)$ and $R(G)=\sum_{i=1}^n Y_i\mathds 1(X_i\in G)-\frac{\#(\mathcal X\cap G)}{2}$, for all $G\in\mathcal S$. Note that $R(G)$ is piecewise constant, and can only take a finite number of values. It is clear that 
\begin{equation*}
	\{R(G):G\in\mathcal S,|G|\geq h\}=\{R([X_k,X_l)):1\leq k< l\leq n, X_l-X_k>h\}.
\end{equation*}
Recall that for $1\leq k< l\leq n$, $\displaystyle{R([X_k,X_l))=\frac{1}{2}\sum_{i=k}^{l-1}(2Y_i-1)}$.
Therefore, 
\begin{align}
	\PP_\emptyset[R\geq 0] & = \PP_\emptyset\left[\max_{\substack{1\leq k < l\leq n \\ X_l-X_k>h}} R([X_k,X_l))>0\right] \nonumber \\
	& \leq \PP_\emptyset\left[\bigcup_{1\leq k < l\leq n} \left\{R([X_k,X_l))>0\right\}\cap\left\{X_l-X_k>h\right\}\right] \nonumber \\
	& \leq \sum_{1\leq k < l\leq n}\PP_\emptyset\left[R([X_k,X_l))>0,X_l-X_k>h\right] \nonumber \\
	& \leq \sum_{1\leq k < l\leq n}\PP_\emptyset\left[\sum_{i=k}^{l-1}(2\xi_i-1)>0\right]\PP[X_l-X_k>h]. \label{1stErrS1}
\end{align}
For $1\leq k<l\leq n$, 
\begin{equation}
	\PP_\emptyset\left[\sum_{i=k}^{l-1}(2\xi_i-1)>0\right]\leq \exp\left(-\frac{(l-k)\sigma^2}{8}\right), \label{1stErrS2}
\end{equation}
using Markov's inequality and \eqref{subgauss}.

If the design is (DD), then $\PP[X_l-X_k>h]$ is 1 if and only if $l-k>nh$, 0 otherwise, so from \eqref{1stErrS1} and \eqref{1stErrS2} we get that
\begin{align*}
	\PP_\emptyset[R\geq 0] & \leq \sum_{l-k >nh}\exp\left(-\frac{(l-k)\sigma^2}{8}\right) \\
	& \leq \sum_{l-k >nh}\exp\left(\frac{(-nh)\sigma^2}{8}\right) \\
	& \leq \frac{n^2}{2}\exp\left(\frac{(-nh)\sigma^2}{8}\right) \longrightarrow 0,
\end{align*}
when $n\rightarrow\infty$, which proves that the error of the first kind goes to zero.

If the design is (RD), let us use the following Lemma.

\begin{lemma}\label{lemmaRD}
	Let $X_1,\ldots,X_n$ be the (RD) design. Then, for any $1\leq k<l\leq n$, and $h>0$,
	\begin{equation*}
		\PP[X_l-X_k>h] \leq  n\exp\left(-(n-1)h(1-e^{-u})+u(l-k)\right), \forall u>0.
	\end{equation*}
\end{lemma}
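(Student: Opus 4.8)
The plan is to turn the continuous event $\{X_l-X_k>h\}$ into a statement about a binomial count and then apply a Chernoff (exponential Markov) bound, the factor $n$ arising from a union/exchangeability argument over which sample point realizes the $k$-th order statistic. Write $U_1,\ldots,U_n$ for the underlying i.i.d. uniform variables, so that $X_1\le\cdots\le X_n$ are their order statistics and $X_k=U_{(k)}$ equals one of the $U_j$. The starting observation is that, conditionally on the identity $X_k=U_{(k)}$, one has $X_l>X_k+h$ if and only if at most $l-k-1$ of the points lying above $X_k$ fall in $(X_k,X_k+h]$; indeed $X_l$ is the $(l-k)$-th point after $X_k$. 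Moreover, since $X_l\le 1$, the event $\{X_l-X_k>h\}$ forces $X_k<1-h$, which I will use to control the length of the scanning interval.

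First I would decompose over the identity of the $k$-th order statistic. Because $U_{(k)}$ equals exactly one of the $U_j$ almost surely,
$$\{X_l-X_k>h\}=\bigsqcup_{j=1}^n\Big\{U_j=U_{(k)},\ U_j<1-h,\ \#\{i\ne j:U_i\in(U_j,U_j+h]\}\le l-k-1\Big\},$$
and by exchangeability of $U_1,\ldots,U_n$ every summand has the same probability, so $\PP[X_l-X_k>h]=n\,\PP[E_1]$ with $E_1$ the $j=1$ event. Dropping the constraint $U_1=U_{(k)}$ only enlarges the event, hence
$$\PP[X_l-X_k>h]\le n\,\PP\big[U_1<1-h,\ \#\{i\ne 1:U_i\in(U_1,U_1+h]\}\le l-k-1\big].$$

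Next I would condition on $U_1=t$. For $t<1-h$ the interval $(t,t+h]$ lies inside $[0,1]$ and has length exactly $h$, so $\#\{i\ne 1:U_i\in(t,t+h]\}$ is $\mathrm{Binomial}(n-1,h)$, \emph{uniformly} in such $t$. A one-sided Chernoff bound with parameter $u>0$ gives, for $B\sim\mathrm{Binomial}(n-1,h)$,
$$\PP[B\le l-k-1]\le e^{u(l-k-1)}\big(1-h(1-e^{-u})\big)^{n-1}\le \exp\big(u(l-k-1)-(n-1)h(1-e^{-u})\big),$$
where the last step uses $1-x\le e^{-x}$. Integrating this uniform bound over $t\in[0,1-h)$ (an interval of length at most $1$) and using $l-k-1\le l-k$ yields exactly
$$\PP[X_l-X_k>h]\le n\exp\big(-(n-1)h(1-e^{-u})+u(l-k)\big).$$

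The only delicate points are bookkeeping ones rather than genuine obstacles: one must justify the disjoint decomposition and the exchangeability that produces the clean factor $n$, and one must retain the restriction $U_1<1-h$ so that the binomial parameter equals exactly $h$ and the Chernoff bound is uniform in $t$. Without that restriction the boundary region $t>1-h$, where the interval $(t,t+h]$ is truncated and its probability drops below $h$, would contribute a term of the wrong form; the point is that on $\{X_l-X_k>h\}$ one automatically has $X_k<1-h$, so this region never arises. Everything else is the standard exponential-moment computation for a binomial lower tail.
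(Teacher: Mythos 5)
Your proof is correct and follows essentially the same route as the paper's: decompose over which underlying uniform variable realizes $X_k$ (yielding the factor $n$), reduce the event to a lower tail of a $\mathrm{Binomial}(n-1,h)$ count of points in $(X_k,X_k+h]$, and apply an exponential Markov bound together with $1-x\le e^{-x}$. You are in fact slightly more careful than the paper, which treats the count $\#\bigl(\tilde{\mathcal X}\cap (x,x+h)\bigr)$ as $\mathrm{Binomial}(n-1,h)$ for every $x$, whereas you explicitly retain the constraint $U_1<1-h$ (forced by $X_l\le 1$ on the event in question) so that the scanning interval is never truncated and the binomial parameter is exactly $h$.
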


\paragraph{Proof of Lemma \ref{lemmaRD}}
Note that the event $\{X_l-X_k>h\}$ is equivalent to $\{\#(\mathcal X\cap (X_k,X_k+h)) < l-k\}$. Let us denote by $X_1',\ldots,X_n'$ the preliminary design, from which $X_1,\ldots,X_n$ is the reordered version. The random variables $X_1',\ldots,X_n'$ are then i.i.d., with uniform distribution on $[0,1]$. Hence,
\begin{align}
	\PP[X_l-X_k>h] & = \sum_{j=1}^n \PP\left[\#(\mathcal X\cap (X_k,X_k+h)) < l-k, X_k=X_j'\right] \nonumber \\
	& \leq \sum_{j=1}^n \PP\left[\#(\mathcal X\cap (X_j',X_j'+h)) < l-k\right] \nonumber \\
	& = \sum_{j=1}^n \E\left[\PP\left[\#(\mathcal X\cap (X_j',X_j'+h)) < l-k|X_j'\right]\right] \nonumber \\
	& = \sum_{j=1}^n \E\left[f(X_j')\right], \label{1stErrS3}
\end{align}
where $f(x)=\PP\left[\#(\tilde {\mathcal X}\cap (x,x+h)) < l-k\right]$, for all $x\in[0,1]$, and $\tilde {\mathcal X}=\{X_1',\ldots,X_{n-1}'\}$.
The random variable $\#(\tilde{\mathcal X}\cap (x,x+h))$ is binomial with parameters $n-1$ and $h$, hence, $f(x)=\PP[N\geq n-1-(l-k)]$, where $N$ is a binomial random variable with parameters $n-1$ and $1-h$. By Markov's inequality, for all $u>0$,
\begin{align}
	\label{1stErrS4} f(x) & \leq e^{-u(n-1-l+k)}\E[e^{uN}] \nonumber \\
	& = e^{-u(n-1-l+k)}\left((1-h)e^u+h\right)^{n-1} \nonumber \\
	& = e^{u(l-k)}\left((1-h(1-e^{-u})\right)^{n-1} \nonumber \\
	& \leq e^{u(l-k)}e^{-(n-1)h(1-e^{-u})},
\end{align}
where we used the inequality $1-x\leq e^{-x}, \forall x\in\R$ in the last line. Then, \eqref{1stErrS3} and \eqref{1stErrS4} yield the lemma. \hfill $\Box$\\[0.4cm]

Therefore, by \eqref{1stErrS1}, \eqref{1stErrS2} and \eqref{1stErrS3}, and Lemma \ref{lemmaRD} with $u=\sigma^2/8$,
\begin{align*}
	\PP_\emptyset[R\geq 0] & \leq \sum_{1\leq k < l\leq n}n\exp\left(-\frac{(l-k)\sigma^2}{8}-(n-1)h(1-e^{-u})+u(l-k)\right) \\
	& \leq \frac{n^3}{2}\exp\left(-(n-1)h(1-e^{-\sigma^2/8})\right) \longrightarrow 0,
\end{align*}
when $n\rightarrow \infty$, which proves that the error of the first kind goes to zero.

Let us bound, now, the error of the second kind. Let $G\in\mathcal S$ satisfying $|G|\geq h$. For this $G$, denote by $N_G=\#(\mathcal X\cap G)$. Then,
\begin{align}
	\PP_G[R<0] & \leq \PP_G\left[R(G)\leq N_G/2\right] \nonumber \\
	& \leq \PP\left[\sum_{i=1}^n \xi_i\mathds 1(X_i\in G)\leq -N_G/2\right]. \label{PrUB2_S}
\end{align}

For the design (DD), $N_G$ is the integer part of $n|G|$, so $N_G\geq nh$. Therefore, by Markov's inequality, and by \eqref{subgauss}, \eqref{PrUB2_S} becomes 
\begin{equation}
	\PP_G[R<0] \leq \exp\left(-\frac{\sigma^2N_G}{8}\right) \leq \exp\left(-\frac{nh\sigma^2}{8}\right). \label{DDUB2}
\end{equation}

For the design (RD), $N_G$ is a random binomial variable, with parameters $n$ and $|G|$. By conditioning to the design and using Markov's inequality, \eqref{PrUB2_S} becomes 
\begin{align}
	\PP_G[R<0] & \leq \PP\left[\sum_{i=1}^n -\xi_i\mathds 1(X_i\in G)\geq N_G/2\right] \nonumber \\
	& \leq \E\left[\exp\left(-\frac{\sigma^2 N_G}{8}\right)\right] \nonumber \\
  & \leq \exp\left(-Cn|G|\right) \leq \exp\left(-Cnh\right), \label{RDUB2}
\end{align}
where $C=1-e^{-\frac{\sigma^2}{8}}$.

In both cases \eqref{DDUB2} and \eqref{RDUB2}, the right side does not depend on $G$, and goes to zero as $n\rightarrow\infty$. We conclude that, for both designs (DD) and (RD), 
\begin{equation*}
	\sup_{|G|\geq h}\PP_G[R<0] \longrightarrow 0,
\end{equation*}
which ends the proof of the upper bound.

\subparagraph{Lower bound}

We more or less reproduce the proof of \cite{Gayraud2001}, Theorem 3.1. Here, the noise is supposed to be zero-mean Gaussian, with variance $\sigma^2$. Let us assume that $\displaystyle{\frac{nh}{\ln n}\longrightarrow 0}$. Let $M=1/h$, assumed to be an integer, without loss of generality. For $q=1,\ldots,M$, let $G_q=[(q-1)h,qh]$. %Again without loss of generality, we assume that $nh$ is not an integer. This ensures, for the design (DD), that no design point falls in two different $G_q$'s at the time. For the design (RD), this property holds almost surely. 
For $q=1,\ldots,M$, let $Z_q=\frac{d\PP_{G_q}}{d\PP_\emptyset}(X_1,Y_1,\ldots,X_n,Y_n)$, and denote by $\bar Z=\frac{1}{M}\sum_{q=1}^M Z_q$. Let $\tau_n$ be any test. Then,
\begin{align}
	\gamma_n(\tau_n,\mathcal S) & \geq \PP_\emptyset\left[\tau_n=1\right]+\frac{1}{M}\sum_{q=1}^M\PP_{G_q}\left[\tau_n=0\right] \nonumber \\
	& \geq \frac{1}{M}\sum_{q=1}^M\left(\PP_\emptyset\left[\tau_n=1\right]+\PP_{G_q}\left[\tau_n=0\right]\right) \nonumber \\
	& \geq \frac{1}{M}\sum_{q=1}^M\left(\E_\emptyset\left[\tau_n\right]+\E_{G_q}\left[1-\tau_n\right]\right) \nonumber \\
	& \geq \frac{1}{M}\sum_{q=1}^M\E_\emptyset\left[\tau_n+(1-\tau_n)Z_q\right] \nonumber \\
	& \geq \E_\emptyset\left[\tau_n+(1-\tau_n)\bar Z\right] \nonumber \\
	& \geq \E_\emptyset\left[\left(\tau_n+(1-\tau_n)\bar Z\right)\mathds 1(\bar Z\geq 1/2)\right] \nonumber \\
	& \geq \frac{1}{2}\PP_\emptyset\left[\bar Z\geq 1/2\right]. \label{LBtest01}
\end{align}
Let us prove that $\E_\emptyset[\bar Z]=1$, and that $\VV_\emptyset[\bar Z]\longrightarrow 0$. This will imply that the right side term of \eqref{LBtest01} goes to $1/2$, when $n\rightarrow\infty$. 

For $q=1,\ldots,M$, under the null hypothesis, 
\begin{align}
	Z_q & = \exp\left(-\frac{1}{2\sigma^2}\sum_{i=1}^n\left((Y_i-\mathds 1(X_i\in G_q))^2-Y_i^2\right)\right) \nonumber \\
	& = \exp\left(\frac{1}{2\sigma^2}\sum_{i=1}^n(2\xi_i-1)\mathds 1(X_i\in G_q)\right). \label{Zq}
\end{align}
%Let us compute $\E_\emptyset[Z_q]$, for $q=1,\ldots,M$. 
%\begin{align*}
%	\E_\emptyset[Z_q] & = \E\left[\exp\left(\frac{1}{2\sigma^2}\sum_{i=1}^n(2\xi_i-1)\mathds 1(X_i\in G_q)\right)\right] \\
%	& = \E\left[\E\left[\exp\left(\frac{1}{2\sigma^2}\sum_{i=1}^n(2\xi_i-1)\mathds 1(X_i\in G_q)\right) |\mathcal X \right]\right] \\
%	& = \E\left[\exp\left(-\frac{\#(\mathcal X\cap G_q)}{2\sigma^2}\right)\E\left[\exp\left(\frac{1}{\sigma^2}\sum_{i=1}^n\xi_i\mathds 1(X_i\in G_q)\right) | \mathcal X\right]\right] \\
%	& = 1,
%\end{align*}
%since the noise is zero mean Gaussian, with variance $\sigma^2$, independent of $\mathcal X$. Then,
By its definition, $Z_q$ has expectation 1 under $\PP_\emptyset$:
\begin{equation}\label{expectZ}
	\E_\emptyset[\bar Z]=1.
\end{equation}

%Now, a similar computation to that of $\E_\emptyset[Z_q]$, for $q=1,\ldots,M$, shows that 
Since, almost surely, no design point falls in two $G_q$'s at the time, a simple computation shows that the random variables $Z_q, q=1,\ldots,M$, are not correlated. Thus,
$$\VV_\emptyset[\bar Z]=\frac{1}{M^2}\sum_{q=1}^M\VV_\emptyset[Z_q].$$
Let us bound from above $\VV_\emptyset[Z_q]$, for $q=1,\ldots,M$:
\begin{align}
	\VV_\emptyset[Z_q] & \leq \E_\emptyset[Z_q^2] \nonumber \\
	& = \E\left[\exp\left(-\frac{\#(\mathcal X\cap G_q)}{\sigma^2}\right)\E_\emptyset\left[\exp\left(\frac{2}{\sigma^2}\sum_{i=1}^n\xi_i\mathds 1(X_i\in G_q)\right) | \mathcal X\right]\right] \nonumber \\
	& = \E\left[\exp\left(\frac{\#(\mathcal X\cap G_q)}{\sigma^2}\right)\right]. \label{boundonvariance}
\end{align}

If the design is (DD), then we get that 
\begin{equation*}
	\VV_\emptyset[Z_q]\leq \exp\left(\frac{nh+1}{\sigma^2}\right),
\end{equation*}
and the variance of $\bar Z$ is then bounded from above:
\begin{equation}
	\VV_\emptyset[\bar Z]\leq h\exp\left(\frac{nh+1}{\sigma^2}\right). \label{var1}
\end{equation}

If the design is (RD), then $\#(\mathcal X\cap G_q)$ is a binomial random variable with parameters $n$ and $h$, so from \eqref{boundonvariance}, we get that

\begin{align*}
	\VV_\emptyset[Z_q] & \leq \left(1+\left(e^{1/\sigma^2}-1\right)h\right)^n \\
	& \leq \exp\left(Cnh\right),
\end{align*}
where $C=e^{1/\sigma^2}-1$, and the variance of $\bar Z$ is then bounded from above:
\begin{equation}
	\VV_\emptyset[\bar Z]\leq h\exp\left(Cnh\right). \label{var2}
\end{equation}

Since we assumed that $nh/\ln n\longrightarrow 0$, the right side terms of \eqref{var1} and \eqref{var2} go to zero, and therefore, for both designs (DD) and (RD),
\begin{equation}\label{boundvarianceZDD}
	\VV_\emptyset[\bar Z]\longrightarrow 0.
\end{equation} 

Finally, we get from \eqref{LBtest01}, \eqref{expectZ} and \eqref{boundvarianceZDD}, that 
\begin{equation*}
	\underset{n\rightarrow\infty}{\operatorname{liminf}}\gamma_n(\tau_n,\mathcal S)\geq \frac{1}{2}.
\end{equation*}
This concludes the proof. \hfill $\blacksquare$

\subsection{Proof of Theorem \ref{ThmChangePointUB}}

The beginning of this proof holds for any design $\{X_1,\ldots,X_n\}$, independent of the noise $\xi_i, i=1,\ldots,n$.
Let $G\in\mathcal S_0$. Let $M=\max\{i=1,\ldots,n : X_i\in G\}$ - set $M=0$ if the set is empty -. Then, $\hat M_n\in\underset{M'=1,\ldots,n}{\operatorname{ArgMax}} \text{ } \left(F(M')-F(M)\right)$, and, by \eqref{GausProc},

\begin{equation*}
	F(M')-F(M)= -|M'-M| + { \left\{
    \begin{array}{l}
        2\sum_{i=M'+1}^M\xi_i   \mbox{ }\mbox{ if } M>M',   \vspace{3mm} \\
				0 \mbox{ }\mbox{ if } M'=M, \vspace{3mm} \\
        -2\sum_{i=M+1}^{M'}\xi_i   \mbox{ }\mbox{ if } M<M'.
    \end{array}
	\right.}
\end{equation*}

Let us complete the i.i.d. sequence $\xi_1,\ldots,\xi_n$ to obtain an infinite double sided i.i.d. sequence $(\xi_i)_{i\in\mathbb Z}$, independent of the design.
Let $k\in\mathbb N^*$ be any positive integer. Define, for $i\in\mathbb Z, \tilde\xi_i=\xi_{i+M}$. Since $M$ depends on the design only, it is independent of the $\xi_i,i\in\mathbb Z$, and therefore, the $\tilde\xi_i,i\in\mathbb Z$ are i.i.d., with same distribution as $\xi_1$.
Let $E_k$ be the event $\{\hat M_n\geq M+k\}$. If $E_k$ holds, then $F(j)-F(M)\geq 0$ for some $j\geq M+k$, yielding 
\begin{align*}
	0 & \leq \max_{M+k\leq j\leq n}\left(M-j - 2\sum_{i=M+1}^{j}\xi_i\right) \\
	& = \max_{M+k\leq j\leq n}\left(M-j - 2\sum_{i=1}^{j-M}\tilde\xi_i\right) \\
	& = \max_{k\leq j\leq n-M}\left(-j - 2\sum_{i=1}^{j}\tilde\xi_i\right) \\
	& \leq \max_{j\geq k}\left(-j - 2\sum_{i=1}^{j}\tilde\xi_i\right).
\end{align*}
Hence, for all $u>0$,
\begin{align*}
	\PP_G[E_k] & \leq \PP_G\left[\max_{k\leq j}\left(-j - 2\sum_{i=1}^{j}\hat\xi_i\right) \geq 0\right] \\
	& \leq \PP\left[\max_{k\leq j}\left(-j - 2\sum_{i=1}^{j}\xi_i\right) \geq 0\right] \\
	& \leq \sum_{j=k}^\infty \PP\left[-2\sum_{i=1}^{j}\xi_i \geq j\right] \\
	& \leq \sum_{j=k}^\infty \E\left[e^{-2u\sum_{i=1}^{j}\xi_i}\right]e^{-uj}, \mbox{ by Markov's inequality} \\
	& \leq \sum_{j=k}^\infty e^{(-u+2\sigma^2u^2)j}, \mbox{ by \eqref{subgauss}}
\end{align*}
and, by choosing $u=1/(4\sigma^2)$,
\begin{equation*}
	\PP_G[E_k]\leq Ce^{-k/(8\sigma^2)},
\end{equation*}
where $C=\left(1-e^{-1/(8\sigma^2)}\right)^{-1}$ is a positive constant.
By symmetry, we obtain that :
\begin{equation}\label{devind}
	\PP_G[|\hat M_n-M|\geq k]\leq 2Ce^{-k/(8\sigma^2)}.
\end{equation}

If the design is (DD), the conclusion is straightforward, since for all $i,j=1,\ldots,n, |X_i-X_j|=\frac{|i-j|}{n}$, and Theorem \ref{ThmChangePointUB} is proved. \hfill $\blacksquare$ \\[0.2cm]

\subsection{Proof of Theorem \ref{ThmCPRD}}

First, note that inequality \eqref{devind} holds for any design. Hence, under the random design (RD),

\begin{equation}
	\PP_G[|\hat M_n-M|\geq k]\leq 2Ce^{-k/(8\sigma^2)}, \label{repeat}
\end{equation}

where we recall that $M=\max\{i=1,\ldots,n : X_i\in G\}$, $M=0$ if the set is empty. 

Let $q$ be a positive integer. We aim to bound $\E_G\left[|X_{\hat M_n}-\theta|^q\right]$ from above, where $G=[0,\theta]$, $\theta\in [0,1]$.

\begin{align}
	\E_G\left[|X_{\hat M_n}-\theta|^q\right] & = \sum_{k=-\infty}^\infty \E_G\left[|X_{\hat M_n}-X_M|^q \mathds 1(\hat M_n=M+k)\right] \nonumber \\
	& = \sum_{k=-\infty}^0 \E_G\left[|X_{\hat M_n}-\theta|^q \mathds 1(\hat M_n=M+k)\right] \nonumber \\
	& \hspace{8mm} + \sum_{k=1}^\infty \E_G\left[|X_{\hat M_n}-\theta|^q \mathds 1(\hat M_n=M+k)\right]. \label{stepNewProof1}
\end{align}
Let us bound from above the second sum only. The first one requires exactly the same techniques. 

Just for the sake of notation, we set $X_j$ to zero whenever $j$ is not in the range $\{1,\ldots,n\}$ and we identify $X_M$ with $\theta$ in the next computation.
\begin{align}
	\sum_{k=1}^\infty \E_G & \left[|X_{\hat M_n}-\theta|^q \mathds 1(\hat M_n=M+k)\right] \nonumber \\
	& = \sum_{k=1}^\infty \E_G\left[(X_{M+k}-\theta)^q \mathds 1(\hat M_n=M+k)\right] \nonumber \\
	& \leq \sum_{k=1}^\infty k^{q-1}\sum_{j=0}^{k-1} \E_G\left[(X_{M+j+1}-X_{M+j})^q \mathds 1(\hat M_n=M+k)\right] \nonumber \\
	& \leq \sum_{k=1}^\infty k^{q-1}\sum_{j=0}^{k-1} \E_G\left[(X_{M+j+1}-X_{M+j})^{2q}\right]^{1/2}\PP_G\left[\hat M_n=M+k\right]^{1/2}, \label{stepNewProof2}
\end{align}
where the last line follows from Cauchy-Schwarz inequality.
By \eqref{repeat}, for $k\geq 1$,
\begin{equation} \label{stepNewProof3}
	\PP_G\left[\hat M_n=M+k\right] \leq \PP_G\left[|\hat M_n-M|\geq k\right] \leq  2Ce^{-k/(8\sigma^2)}.
\end{equation}
Let $j$ be a nonnegative integer. Conditioning on the random variable $M$ yields
\begin{equation}\label{stepNewProof4}
	\E_G\left[(X_{M+j+1}-X_{M+j})^{2q}\right] = \sum_{l=1}^n\E_G\left[(X_{M+j+1}-X_{M+j})^{2q}|M=l\right]\PP[M=l]. 
\end{equation}
To compute the first term in the right hand side of \eqref{stepNewProof4}, we use the following lemma. For all vectors $(Y_1,\ldots,Y_n)$ with pairwise disjoint entries, we denote by $(Y_{(1)},\ldots,Y_{(n)})$ their increasing reordering, i.e., $Y_{(1)}<\ldots<Y_{(n)})$.

\begin{lemma}\label{Lemma2}
	Let $n$ be a positive integer, $\theta\in[0,1]$ and $Y_1,\ldots,Y_n$ be i.i.d. random variables with uniform distribution in $[0,1]$. Let $M=\max\{i=1,\ldots,n:Y_{(i)}\leq \theta\}$ and $l\in\{1,\ldots,n-1\}$. Let $Z_1,\ldots,Z_{n-l}$ be i.i.d random variables with uniform distribution in $[\theta,1]$.
Then, the conditional distribution of $(Y_{(M+1)},\ldots,Y_{(n)})$ conditional on the event $\{M=l\}$ is equal to the distribution of $Z_{(1)},\ldots,Z_{(n-l)}$.
\end{lemma}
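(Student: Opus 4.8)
The plan is to recognize the event $\{M=l\}$ as a statement about how many of the i.i.d.\ uniform variables land in $[0,\theta]$, and then invoke the classical fact that conditioning i.i.d.\ points on their count in a subregion makes the points in each subregion i.i.d.\ uniform there and independent across the two regions.

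First I would note that since the $Y_i$ are absolutely continuous, their values are almost surely distinct and none equals $\theta$, so $M=\#\{i=1,\ldots,n:Y_i\leq\theta\}$ and the event $\{M=l\}$ coincides, up to a null set, with the event that exactly $n-l$ of the $Y_i$ exceed $\theta$. Crucially, on this event the vector $(Y_{(M+1)},\ldots,Y_{(n)})=(Y_{(l+1)},\ldots,Y_{(n)})$ is exactly the increasing reordering of the $n-l$ variables that fall in $(\theta,1]$. So the lemma reduces to identifying the joint law of those $n-l$ excess points given $\{M=l\}$.

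Next I would establish the conditional law via the following general fact: if $U_1,\ldots,U_n$ are i.i.d.\ with law $\nu$ and $A$ is measurable with $0<\nu(A)<1$, then given that exactly $m$ of them lie in $A^c$, those $m$ variables are i.i.d.\ with law $\nu(\cdot\mid A^c)$. I would prove this by conditioning on the (random) index subset $I$ of size $m$ that lands in $A^c$: by independence of the $U_i$, given $I$ the variables $\{U_i:i\in I\}$ are i.i.d.\ $\nu(\cdot\mid A^c)$, and by exchangeability each admissible $I$ is equally likely under the count event, so the conditional law does not depend on $I$. Applying this with $\nu$ uniform on $[0,1]$, $A=[0,\theta]$, $A^c=(\theta,1]$ and $m=n-l$: conditional on $\{M=l\}$, the $n-l$ points exceeding $\theta$ are i.i.d.\ uniform on $(\theta,1]$, i.e.\ distributed as $Z_1,\ldots,Z_{n-l}$. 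Since the increasing reordering is a deterministic measurable map, the reordering of those points has the law of $(Z_{(1)},\ldots,Z_{(n-l)})$, which combined with the identification of the first step gives the claim.

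I expect the only point requiring genuine care is the rigorous justification of the conditioning-on-counts fact. As a fully elementary alternative that sidesteps it, one can compute directly from the $n!$-on-the-simplex density of $(Y_{(1)},\ldots,Y_{(n)})$: integrating out $y_1<\cdots<y_l\leq\theta$ gives $\tfrac{n!}{l!}\theta^l$ on $\{\theta<y_{l+1}<\cdots<y_n<1\}$, and dividing by $\PP[M=l]=\binom{n}{l}\theta^l(1-\theta)^{n-l}$ yields the density $(n-l)!\,(1-\theta)^{-(n-l)}$, which is exactly the density of the order statistics of $n-l$ i.i.d.\ uniforms on $[\theta,1]$. Either route closes the argument.
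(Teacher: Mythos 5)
Your argument is correct. Your primary route is genuinely different from the paper's: you reduce the lemma to the general fact that, conditionally on the number of i.i.d.\ points falling in a region, the points in each region are i.i.d.\ with the restricted law, and you justify this by conditioning on the random index set and invoking exchangeability. The paper instead proceeds exactly by the computation you relegate to an ``elementary alternative'': starting from the density $n!\,\mathds 1(0\le y_1\le\cdots\le y_n\le 1)$ of the order statistics, it writes $\E\left[f(Y_{(l+1)},\ldots,Y_{(n)})\mathds 1(M=l)\right]$ as an integral over $[0,1]^n$, observes that the integral factors over $(y_1,\ldots,y_l)$ and $(y_{l+1},\ldots,y_n)$, and concludes that the conditional density of $(Y_{(l+1)},\ldots,Y_{(n)})$ given $\{M=l\}$ is proportional to $\mathds 1(\theta\le y_{l+1}\le\cdots\le y_n\le 1)$, without even needing the explicit normalizing constant $(n-l)!\,(1-\theta)^{-(n-l)}$ that you compute. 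Your counts argument buys more generality (arbitrary law $\nu$ and arbitrary measurable $A$, plus independence between the two blocks of points, which is exactly what the paper later asserts ``by a similar reasoning'' in the proof of Theorem \ref{theoremMuRD}), at the price of the extra care you correctly flag in making ``the variables that land in $A^c$'' a well-defined random vector; the density-factorization route sidesteps that entirely. Your preliminary identification of $\{M=l\}$ with the event that exactly $l$ of the $Y_i$ lie in $[0,\theta]$, valid almost surely since the uniforms are a.s.\ distinct and a.s.\ avoid $\theta$, is needed for either route and is handled correctly.
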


\begin{proof}

Recall that the joint density of $(Y_{(1)},\ldots,Y_{(n)})$ is given by $$n! \mathds 1(0\leq y_1\leq\ldots\leq y_n),\hspace{8mm} y_1,\ldots,y_n\in\R.$$
Let $f:\R^{n-l}\longrightarrow\R$ be a continuous and bounded function. Then,
\begin{align}
	\E & \left[f(Y_{(M+1)},\ldots,Y_{(n)})|M=l\right] \nonumber \\
	& = \E\left[f(Y_{(l+1)},\ldots,Y_{(n)})\mathds 1(M=l)\right] \left(\PP[M=l]\right)^{-1} \nonumber \\
	& = \int_{[0,1]^n} f(y_{l+1},\ldots,y_n)\mathds 1(y_1\leq\ldots\leq y_l\leq\theta\leq y_{l+1}\leq\ldots\leq y_n)dy_1\ldots dy_n \nonumber \\
	& \hspace{12mm}\times n!\left(\PP[M=l]\right)^{-1} \nonumber \\
	& =  \int_{[0,1]^{n-l}} f(y_{l+1},\ldots,y_n) \mathds 1(\theta\leq y_{l+1}\leq\ldots\leq y_n)dy_{l+1}\ldots dy_n \nonumber \\
	& \hspace{12mm} \times n! \int_{[0,1]^l}\mathds 1(y_1\leq\ldots\leq y_l\leq\theta) \left(\PP[M=l]\right)^{-1},
\end{align}
which shows that the conditional joint density of $(Y_{(l+1)},\ldots,Y_{(n)})$ is proportional to $\mathds 1(\theta\leq y_{l+1}\leq\ldots\leq y_n\leq 1)$ and therefore shows Lemma \ref{Lemma2}. 

\end{proof}

As a consequence of this lemma, for all $l\in\{1,\ldots,n\}$ and $j\in\{0,\ldots,n-l-1\}$,
\begin{align*}
	\E_G\left[(X_{M+j+1}-X_{M+j})^{2q}|M=l\right] & = \E\left[\left(\min(Z_1,\ldots,Z_{n-l})-\theta\right)^{2q}\right] \\
	& = 2q(1-\theta)^{2q}\frac{\Gamma(2q)\Gamma(n-l+1)}{\Gamma(n+2q-l+1)},
\end{align*}
where $Z_1,\ldots,Z_{n-l}$ are i.i.d. unformly distributed in $[\theta,1]$ and $\Gamma$ is Euler's Gamma function.

If $j\geq n-l$, then the previous conditional expectation is zero. 

In addition,
$$\PP[M=l]={n\choose l}\theta^l(1-\theta)^{n-l}.$$
Hence, following \eqref{stepNewProof4}, for $j\in\{0,\ldots,k-1\}$
\begin{align}\label{stepNewProof5}
	\E_G & \left[(X_{M+j+1}-X_{M+j})^{2q}\right] \nonumber \\
	& = \sum_{l=1}^n\frac{n!}{l!(n-l)!}\theta^l(1-\theta)^{n-l}2q(1-\theta)^{2q}\frac{\Gamma(2q)\Gamma(n-l+1)}{\Gamma(n+2q-l+1)} \nonumber \\
	& = \frac{n!(2q)!}{(n+2q)!}\sum_{l=1}^n\frac{(n+2q)!}{l!(n+2q-l)!}\theta^l(1-\theta)^{n+2q-l} \nonumber \\
	& \leq \frac{n!(2q)!}{(n+2q)!} \nonumber  \\
	& \leq \frac{(2q)!}{n^{2q}}.
\end{align}

Finally, combining \eqref{stepNewProof2} with \eqref{repeat} and \eqref{stepNewProof5} yields:

\begin{equation} \label{ExtraStep1}
	\sum_{k=1}^\infty \E_G\left[|X_{\hat M_n}-\theta|^q \mathds 1(\hat M_n=M+k)\right] \leq \frac{\sqrt{(2q)!}}{n^q} \sum_{k=1}^\infty k^q e^{-k/(16\sigma^2)}.
\end{equation}	
	
In order to bound the sum in the right hand side of \eqref{ExtraStep1}, we use the following lemma.

\begin{lemma} \label{ExtraLemma}
	For all $\lambda>0$ and all $q>0$,
	$$\sum_{k=1}^\infty k^qe^{-\lambda k}\leq (2+\lambda^{-1})\frac{q!}{\lambda^{q+1}}.$$
\end{lemma}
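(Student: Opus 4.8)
The plan is to prove Lemma \ref{ExtraLemma}, which asserts that for all $\lambda>0$ and $q>0$,
$$\sum_{k=1}^\infty k^q e^{-\lambda k}\leq (2+\lambda^{-1})\frac{q!}{\lambda^{q+1}}.$$
The natural idea is to compare the sum with the integral $\int_0^\infty x^q e^{-\lambda x}\diff x=q!/\lambda^{q+1}$, which is exactly the target up to the factor $(2+\lambda^{-1})$. First I would use a standard sum-versus-integral bound: since the function $x\mapsto x^q e^{-\lambda x}$ is unimodal (increasing then decreasing, with maximum at $x^*=q/\lambda$), one cannot directly dominate every term by the integral over the preceding unit interval on the increasing part. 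The cleanest route is therefore to split the sum at the mode $k^*=\lfloor q/\lambda\rfloor$ and bound the two pieces separately, or, more simply, to bound the sum by the integral plus a correction term coming from the single largest term.

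The key steps I would carry out are as follows. For the decreasing part $k\geq k^*$, each term satisfies $k^q e^{-\lambda k}\leq \int_{k-1}^{k} x^q e^{-\lambda x}\diff x$ only once we are past the mode, so summing gives a contribution bounded by $\int_0^\infty x^q e^{-\lambda x}\diff x=q!/\lambda^{q+1}$. For the increasing part $k\leq k^*$, I would instead use $k^q e^{-\lambda k}\leq \int_{k}^{k+1} x^q e^{-\lambda x}\diff x$, again yielding at most $q!/\lambda^{q+1}$. Adding the two estimates already gives a bound of $2q!/\lambda^{q+1}$; the extra $\lambda^{-1}\cdot q!/\lambda^{q+1}$ in the claimed inequality provides comfortable slack to absorb the single boundary/mode term that is double counted or left over in the split. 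Concretely, one writes
\begin{equation*}
\sum_{k=1}^\infty k^q e^{-\lambda k}\leq 2\int_0^\infty x^q e^{-\lambda x}\diff x + \max_{k\geq 1} k^q e^{-\lambda k},
\end{equation*}
and then checks that the maximal term is itself at most $\lambda^{-1}q!/\lambda^{q+1}$, which follows since $\max_x x^q e^{-\lambda x}=(q/\lambda)^q e^{-q}\leq q!/\lambda^q$ by comparing $(q/e)^q$ with $q!$ (Stirling's lower bound $q!\geq (q/e)^q$), and then dividing by an extra $\lambda$.

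The main obstacle I anticipate is handling the increasing portion of the summand correctly: on $\{k< q/\lambda\}$ the terms grow, so the naive inequality $k^q e^{-\lambda k}\leq\int_{k-1}^k x^q e^{-\lambda x}\diff x$ fails, and one must shift the comparison interval to $[k,k+1]$ there. The bookkeeping of which unit interval dominates which term, and ensuring no interval is used twice, is the delicate point; the factor $(2+\lambda^{-1})$ rather than a single $q!/\lambda^{q+1}$ is precisely what the proof needs to tolerate this two-sided comparison plus the leftover mode term. Once the monotone comparison is set up on each side of the mode and the single extremal term is controlled via $q!\geq(q/e)^q$, the estimate assembles immediately and no further analytic input is required.
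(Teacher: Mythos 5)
Your proposal is essentially the paper's own argument: split the sum at the mode $q/\lambda$ of the unimodal function $x\mapsto x^qe^{-\lambda x}$, compare each monotone piece with the integral $\int_0^\infty x^qe^{-\lambda x}\diff x=q!/\lambda^{q+1}$ (shifting the comparison interval to $[k,k+1]$ on the increasing side and $[k-1,k]$ on the decreasing side), and control the leftover peak term via $\max_x x^qe^{-\lambda x}=(q/\lambda)^qe^{-q}\le q!/\lambda^q$, i.e.\ Stirling's bound $q^qe^{-q}\le q!$. The one quibble is your final claim that the peak term is at most $\lambda^{-1}q!/\lambda^{q+1}$: Stirling only gives $q!/\lambda^q=\lambda\cdot q!/\lambda^{q+1}$, so your assembled bound is $(2+\lambda)q!/\lambda^{q+1}$, which matches the stated $(2+\lambda^{-1})q!/\lambda^{q+1}$ only when $\lambda\le 1$ --- but the paper's own last line has exactly the same looseness (it actually produces $(1+2\lambda)q!/\lambda^{q+1}$), so this is not a defect of your approach relative to theirs.
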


\begin{proof}
	Denote by $\phi(x)=x^qe^{-\lambda x}$, for all $x\geq 0$. Then, $\phi$ is positive and it is increasing on $[0,q/\lambda]$ and decreasing on $[q/\lambda,\infty)$. Therefore, one can write:
	
\begin{align}\label{ExtraLemmaStep1}
	\sum_{k=1}^\infty \phi(k) & \leq \sum_{k\leq q/\lambda-1}\phi(k)+2\phi(q/\lambda)+\sum_{k\geq q/\lambda+1}\phi(k) \nonumber \\
	& \leq \int_0^{q/\lambda} \phi(t)\diff t+2\phi(q/\lambda)+\int_{q/\lambda}^\infty \phi(t)\diff t \nonumber \\
	& = \frac{q!}{\lambda^{q+1}}+2\phi(q/\lambda).
\end{align}
Note that 
\begin{equation*}
	\phi(q/\lambda)=q^qe^{-q}\lambda^{-q} \leq \frac{q!}{\lambda^q}.
\end{equation*}

Hence, \eqref{ExtraLemmaStep1} yields the conclusion of Lemma \ref{ExtraLemma}.

\end{proof}

Thus, for $\lambda=(16\sigma^2)^{-1}$, \eqref{ExtraStep1} continues as

\begin{align*}
	\sum_{k=1}^\infty \E_G\left[|X_{\hat M_n}-\theta|^q \mathds 1(\hat M_n=M+k)\right] & \leq C\frac{\sqrt{(2q)!q!}}{\lambda^q n^q} \\
	& \leq C\frac{(2q)!}{\lambda^q n^q},
\end{align*}
where we used $(q!)^2\leq (2q)!$ in the last inequality and $C=2+\lambda^{-1}$.

Similar arguments would yield the same upper bound for the first sum in \eqref{stepNewProof1}. Therefore, 
\begin{equation*}
\E_G\left[|X_{\hat M_n}-\theta|^q\right]\leq 2C\frac{(2q)!}{\lambda^q n^q}, \quad \forall q\geq 1.
\end{equation*}

Finally, we use the following lemma, that is similar to a Bernstein-type inequality, for non centered random variables.

\begin{lemma} \label{Bernstein}
Let $A\geq 1$ and $\alpha>0$ be given numbers. Let $Z$ be a random variable satisfying $\DS \E[Z^q]\leq A(2q)!\alpha^{2q}$ for all positive integer $q$.
Then, for all $x\geq 0$, 
$$\PP\left[Z\geq \alpha^2(2x+4A)\right]\leq e^{-\frac{x}{8\sqrt x+192A}}.$$
\end{lemma}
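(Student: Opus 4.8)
The plan is to turn the moment hypothesis into a tail bound by Markov's inequality and then optimize over the moment order. Because the hypothesis controls $\E[Z^q]$ rather than $\E[|Z|^q]$, and we only care about the right tail, I would use \emph{even} integers $q$: for any positive even integer $q$ and any $t>0$ the map $z\mapsto z^q$ is nonnegative and $\{Z\ge t\}\subseteq\{Z^q\ge t^q\}$, so that
\begin{equation*}
	\PP[Z\ge t]\le\frac{\E[Z^q]}{t^q}\le\frac{A(2q)!\,\alpha^{2q}}{t^q}.
\end{equation*}
Specializing to the announced threshold $t=\alpha^2(2x+4A)=2\alpha^2(x+2A)$, the powers of $\alpha$ cancel and the bound becomes $A(2q)!\big/\big(2(x+2A)\big)^{q}$, a quantity depending only on $q$, $x$ and $A$.

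Next I would insert a Stirling-type estimate for the factorial, for instance $(2q)!\le e\sqrt{2q}\,(2q/e)^{2q}$, which reduces the bound to the explicit form
\begin{equation*}
	\PP[Z\ge t]\le A\,e\sqrt{2q}\left(\frac{2q^{2}}{e^{2}(x+2A)}\right)^{q}.
\end{equation*}
The task is then to choose a single even integer $q$ for which the right-hand side is at most $\exp\!\big(-x/(8\sqrt x+192A)\big)$. Writing $s=x/(8\sqrt x+192A)$ and using the two elementary inequalities $s\le\sqrt x/8$ and $s\le x/(192A)$, the natural choice is $q\asymp\sqrt{x+2A}$: this makes the base $2q^{2}/\big(e^{2}(x+2A)\big)$ a constant strictly below $1$, so the dominant factor decays like $e^{-c\sqrt{x+2A}}$, which beats $e^{-s}\ge e^{-\sqrt x/8}$ once $x$ is large; meanwhile the prefactor $A\,e\sqrt{2q}$ is absorbed because $q\gtrsim\sqrt A$ dominates $\ln A$ for every $A\ge1$.

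I would organize the verification into two regimes mirroring the two summands of the denominator $8\sqrt x+192A$. When $x$ is small compared with $A^{2}$ the exponent $s$ is itself small — at $x=0$ the claim is the trivial inequality $\PP[Z\ge 4A\alpha^2]\le1$ — so a crude choice of $q$, or the trivial bound, suffices; when $x$ is large compared with $A^{2}$ the term $8\sqrt x$ governs, and one must check that the exponential $e^{-c\sqrt x}$ produced by the (near-)optimal $q$ dominates $e^{-s}$. The main obstacle is purely the constant bookkeeping: one must confirm that the specific constants $8$ and $192$ are large enough that, after replacing the continuous optimizer of $q$ by the nearest even integer (which costs only lower-order terms) and after paying the prefactor $A\,e\sqrt{2q}$, the inequality holds uniformly in $x\ge0$ and $A\ge1$. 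No individual step is conceptually difficult; the care lies entirely in the uniform calibration across the two regimes and in the integer rounding of $q$.
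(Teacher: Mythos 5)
Your route --- Markov's inequality applied to $Z^q$ followed by an optimization over the moment order $q$ --- is genuinely different from the paper's, which instead sets $T=\sqrt Z$, upgrades the hypothesis to $\E[T^q]\leq A(q+1)!\alpha^q$ for every $q\geq 1$ via H\"older, runs a Bernstein-type bound on the moment generating function of $T-\E[T]$, and translates back to $Z=T^2$. Your large-deviation regime is sound: with $q\asymp\sqrt{x+2A}$ the bound $A(2q)!\big/\bigl(2(x+2A)\bigr)^q$ decays like $e^{-\sqrt 2\,\sqrt{x+2A}}$ up to a polynomial prefactor, and since the target exponent $s:=x/(8\sqrt x+192A)$ satisfies $s\leq\min\bigl(\sqrt x/8,\;x/(192A)\bigr)$, the margin $\sqrt 2-1/8$ in the exponent comfortably absorbs the prefactor $Ae\sqrt{2q}$ and the rounding of $q$ to an even integer.

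The gap is in the small-$x$ regime, and it is concrete. For every $x>0$ the target $e^{-s}$ is strictly below $1$, so the trivial bound $\PP[Z\geq t]\leq 1$ suffices only at $x=0$. Restricting to even $q\geq 2$, the Markov bound at $q=2$ equals $6A/(x+2A)^2$, which for $A=1$ and $x=0.01$ is about $1.49$; larger even $q$ only make things worse near $x=0$, since $(2q)!/4^q$ is increasing in $q$. So for $A$ near $1$ and $x$ in a neighbourhood of $0$ (roughly $x\lesssim 0.45$ when $A=1$) your toolkit produces no bound below $1$, let alone below $e^{-s}$: neither ``a crude choice of $q$'' nor ``the trivial bound'' suffices there. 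The missing ingredient is the first moment: if $Z\geq 0$ --- which the lemma implicitly assumes, and which the paper's own proof uses the moment it writes $T=\sqrt Z$ --- then $q=1$ gives $\PP[Z\geq 2\alpha^2(x+2A)]\leq 2A\alpha^2/t= A/(x+2A)\leq 1/2$, and since $s\leq\sqrt x/8<\ln 2$ for all $x<(8\ln 2)^2\approx 30.7$, this single estimate closes the entire small-$x$ regime. Your stated reason for excluding odd $q$ (that only $\E[Z^q]$, not $\E[|Z|^q]$, is controlled) is exactly what blocks this repair, so you must either add the hypothesis $Z\geq 0$ explicitly or find another device for small positive $x$; with that one amendment the plan goes through.
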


\begin{proof}
	Let $T=\sqrt Z$. Then, the second assumption yields 
\begin{equation} \label{AssMoment1}
	\E[T^{2q}]\leq A(2q)!\alpha^{2q}, \forall q\geq 1.
\end{equation}
As a consequence, for all integer $q\geq 2$, H{\"o}lder's inequality yields  
\begin{equation} \label{AssMoment2}
	\E[T^{2q-1}]\leq \E[T^{2q}]^{\frac{2q-1}{2q}}\leq A(2q)!\alpha^{2q-1},
\end{equation}
since $A\geq 1$. In particular, \eqref{AssMoment1} and \eqref{AssMoment2} yield 
\begin{equation}\label{AssMoment3}
	\E[T^q]\leq A(q+1)!\alpha^q, \quad \forall q\geq 1.
\end{equation}

Let $t\geq 0$. By Markov's inequality, for all $\kappa>0$, $\DS \PP[T-\E[T]\geq t]\leq e^{-\kappa t}\E\left[e^{\kappa(T-\E[T]}\right]$ and a Taylor expansion yields
\begin{align*}
	\PP[T-\E[T]\geq t]& \leq e^{-\kappa t}\left(1+\sum_{q=2}^\infty \frac{\kappa^q}{q!}\E\left[(T-\E[T])^q\right] \right) \\
	& \leq e^{-\kappa t}\left(1+\sum_{q=2}^\infty \frac{\kappa^q}{q!}2^{q-1}\E\left[T^q+\E[T]^q\right]\right) \\
	& \leq e^{-\kappa t}\left(1+\sum_{q=2}^\infty \frac{\kappa^q}{q!}2^q\E\left[T^q\right]\right),
\end{align*}
where we used the fact that $\E[T]^q\leq \E[T^q]$ in the last inequality. Hence, by \eqref{AssMoment3}, if $\kappa\leq 1/(4\alpha)$,

\begin{align} \label{StepBern1}
	\PP[T-\E[T]\geq t] & \leq e^{-\kappa t}\left(1+A\sum_{q=2}^\infty (q+1)(2\kappa\alpha)^q\right) \nonumber \\
	& \leq e^{-\kappa t}\left(1+4A(\kappa\alpha)^2\sum_{q=0}^\infty (q+3)2^{-q}\right) \nonumber \\
	& = e^{-\kappa t}\left(1+48A(\kappa\alpha)^2\right) \nonumber \\
	& \leq \exp\left(-\kappa t+48A(\kappa\alpha)^2\right),
\end{align}
where we used the inequality $(1+u)\leq e^u, \forall u\in\R$. 

Set $\DS \kappa=\frac{t}{4\alpha t+96A\alpha^2}$. Then, $\kappa\leq 1/(4\alpha)$ and the expression inside the exponent in the right hand side of \eqref{StepBern1} is bounded from above by $\DS -t^2/\left(4\alpha(2t+48A\alpha)\right)$. 

Now, note that $\E[T]\leq \E[T^2]^{1/2}=\E[Z]^{1/2}\leq (2A)^{1/2}\alpha$, hence, \eqref{StepBern1} implies

\begin{equation} \label{StepBern2}
	\PP\left[T\geq t+(2A)^{1/2}\alpha\right]\leq e^{-\frac{t^2}{4\alpha(2t+48A\alpha)}}.
\end{equation}
Since $Z=T^2$, we get
\begin{equation*} 
	\PP\left[Z\geq 2t^2+4A\alpha^2\right]\leq e^{-\frac{t^2}{4\alpha(2t+48A\alpha)}},
\end{equation*}
which implies Lemma \ref{Bernstein}, by taking $x=t^2/\alpha^2$. 
\end{proof}

Applying Lemma \ref{Bernstein} to the random variable $|X_{\hat M}-\theta|$ yields the second part of the theorem.

\subsection{Proof of Theorem \ref{ThmChangePointLB}}

\paragraph{Deterministic design}
The proof is straightforward for the case of the deterministic design (DD). Let $G_1=[0,0]$ and $G_2=[0,1/(2n)]$. Then $\PP_{G_1}=\PP_{G_2}$, since no point of the design falls in $G_1\triangle G_2$, and for any estimator $\hat G_n$, 
\begin{align*}
	2\sup_{G\in\mathcal S_0}\E_G\left[|\hat G_n\triangle G|\right] & \geq \E_{G_1}\left[|\hat G_n\triangle G_1|\right]+\E_{G_2}\left[|\hat G_n\triangle G_2|\right] \\
	& \geq \E_{G_1}\left[|\hat G_n\triangle G_1|+|\hat G_n\triangle G_2|\right] \\
	& \geq \E_{G_1}\left[|G_1\triangle G_2|\right] \mbox{ by the triangle inequality} \\
	& \geq \frac{1}{2n}. 
\end{align*}
\hfill $\blacksquare$

\paragraph{Random design}
The proof is only slightly different for the case of random design (RD). Let $G_1=[0,0]$ and $G_2=[0,1/(2n)]$. The key is to note equality of the conditional joint distributions $\displaystyle{\PP_{G_1}\left[\hspace{2mm}\cdot\hspace{2mm}\big|X_i>1/(2n), i=1,\ldots,n\right]}$ and $\displaystyle{\PP_{G_2}\left[\hspace{2mm}\cdot\hspace{2mm}\big|X_i>1/(2n), i=1,\ldots,n\right]}$. Indeed, if $Y_i=\xi_i$ and $Y'_i=\mathds 1(X_i\leq1/(2n)) +\xi_i$, for $i=1,\ldots,n$, then conditionally to the event $\{X_i>1/(2n), i=1,\ldots,n\}$, $\displaystyle{(Y_i)_{i=1,\ldots,n}=(Y'_i)_{i=1,\ldots,n}}$ and therefore have the same conditional law.
Hence, one can write:
\begin{align*}
	2\sup_{G\in\mathcal S_0}&\E_G\left[|\hat G_n\triangle G|\right] \\
	& \geq 2\sup_{G\in\mathcal S_0}\E_G\left[|\hat G_n\triangle G|\mathds 1(X_i>1/(2n), i=1,\ldots,n)\right] \\
	& \geq \E_{G_1}\left[|\hat G_n\triangle G_1|\mathds 1(X_i>1/(2n), i=1,\ldots,n)\right] \\
	& \hspace{10mm}+\E_{G_2}\left[|\hat G_n\triangle G_2|\mathds 1(X_i>1/(2n),  i=1,\ldots,n)\right] \\
	& = \Big(\E_{G_1}\left[|\hat G_n\triangle G_1|\big|X_i>1/(2n), i=1,\ldots,n\right] \\
	& \hspace{10mm}+\E_{G_2}\left[|\hat G_n\triangle G_2|\big|X_i>1/(2n), i=1,\ldots,n\right]\Big) \\
	& \hspace{20mm} \times \PP\left[X_1>1/(2n), i=1,\ldots,n\right] \\
	& = \Big(\E_{G_1}\left[|\hat G_n\triangle G_1|\big|X_i>1/(2n), i=1,\ldots,n\right] \\
	& \hspace{10mm}+\E_{G_1}\left[|\hat G_n\triangle G_2|\big|X_i>1/(2n), i=1,\ldots,n\right]\Big)\PP\left[X_1>1/(2n)\right]^n \\
	& \geq \E_{G_1}\left[|G_1\triangle G_2|\big|X_i>1/(2n), i=1,\ldots,n\right] \\
	& \hspace{10mm} \times \PP\left[X_1>1/(2n)\right]^n  \mbox{ by the triangle inequality} \\
	& \geq \frac{1}{2n}\left(1-\frac{1}{2n}\right)^n  \geq \frac{1}{4n}.\\
\end{align*}
\hfill $\blacksquare$

\subsection{Proof of Theorem \ref{theoremMu}}

Consider the event $E=\{|\hat G_n\triangle G|\leq \mu/2\}$. Since $\{X_i : i\in I_0\}$ is a deterministic and regular design, with step $2/n$, Theorem \ref{LSETheorem1} yields $\PP_G[E]\geq 1-C_1n^4e^{-C_2\mu n/2}$.
On the event $E$, $|\hat G_n\triangle G|<\mu\leq |G|$, yielding that $\hat G_n$ and $G$ must intersect. Thus, still on the event $E$,
\begin{equation*}
	|\hat G_n\triangle G|=|\hat b_n-b|+|\hat a_n-a|,
\end{equation*}
where we denoted by $G=[a,b]$ and $\hat G_n=[\hat a_n,\hat b_n]$.
Let $m=\frac{a+b}{2}$ and $\hat m_n=\frac{\hat a_n+\hat b_n}{2}$ be, respectively, the middle points of $G$ and $\hat G_n$. From now on, let us assume that $E$ holds. Then,
\begin{align}
	|\hat m_n-m| & \leq \frac{1}{2}(|\hat b_n-b|+|\hat a_n-a|) \nonumber \\
	& = \frac{1}{2}|\hat G_n\triangle G| \nonumber \\
	& \leq \frac{\mu}{4}. \label{middles}
\end{align}
Therefore, $\hat m_n\in G$ and, combining \eqref{middles} with the fact that $|G|\geq \mu$, 
\begin{equation} \label{Sep}
	\min(\hat m_n, 1-\hat m_n)\geq\frac{\mu}{4}.
\end{equation}
By \eqref{Sep}, $\# I_1^\epsilon \geq \frac{\mu n}{8}-1\geq \frac{\mu n}{16}$ for $n$ large enough, and for $\epsilon\in\{+,-\}$. 
Note that $\{X_i : i\in I_1^+\}$ (resp. $\{X_i : i\in I_1^-\}$) is a deterministic and regular design of the segment $[\hat m_n,1]$ with $b\in[\hat m_n,1]$ (resp. $[0,\hat m_n]$ with $a\in[0,\hat m_n]$), of cardinality greater or equal to $\frac{\mu n}{16}$, as we saw just before. Then, the change-points $a$ and $b$ are estimated as in Theorem \ref{ThmChangePointUB}, using the subsamples $\{(X_i,Y_i) : i\in I_1^+\}$ and $\{(X_i,Y_i) : i\in I_1^-\}$ respectively:
\begin{equation*}
	\PP_G\left[|\tilde a_n-a|\geq \frac{16y}{\mu n} \big |E\right]\leq C_0e^{-y/(8\sigma^2)}
\end{equation*}
and
\begin{equation*}
	\PP_G\left[|\tilde b_n-b|\geq \frac{16y}{\mu n}\big| E\right]\leq C_0e^{-y/(8\sigma^2)},
\end{equation*}
for all $y>0$.
Finally,
\begin{align*}
	\PP_G\left[|\tilde G_n\triangle G|\geq \frac{y}{n}\right] & \leq \PP_G\left[|\tilde G_n\triangle G|\geq \frac{y}{n}\big| E\right]+\PP_G[\bar{E}] \\
	& \leq 2C_0e^{-\mu y/(256\sigma^2)}+C_1n^4e^{-C_2\mu n/2},
\end{align*}
where $\bar E$ stands for the complement of the event $E$.\hfill $\blacksquare$

\subsection{Proof of Theorem \ref{theoremMuRD}}

Now, we consider the random design (RD). For simplicity's sake, we assume that $n$ is even, without loss of generality. As in the proof of Theorem \ref{theoremMu}, let $E$ be the event $\DS \{|\hat G_n\triangle G|\leq \mu/2\}$, where $\hat G_n$ is the preliminary estimator of $G$ based on the first subsample $\mathcal D_0$.

Let $k\in\{1,\ldots,n/2\}$. By a similar reasoning as in Lemma \ref{Lemma2}, conditional on the subsample $\mathcal D_0$ and on the event $\{\#I_1^+=k\}$, the vectors $(X_i,i\in I_1^+)$ and $(\xi_i,i\in I_1^+)$ are independent, the conditional distribution of $(\xi_i,i\in I_1^+)$ is the same as the unconditional distribution of $(\xi_i,i=1,\ldots,k)$ and the conditional distribution of $(X_i,i\in I_1^+)$ is equal to the distribution of the increasing reordering of $k$ i.i.d. random variables uniformly distributed in $[\hat m_n,1]$.
Let $t>0$. We first write that 
\begin{equation} \label{NewProof1}
	\PP[|\tilde G_n\triangle G|\geq t|\mathcal D_0]\leq \PP[|\tilde G_n\triangle G|\geq t|\mathcal D_0]\mathds 1(E)+\mathds 1\left(\bar E\right),
\end{equation}
where $\mathds 1(E)$ is one if $E$ is satisfied, 0 otherwise. Note that if $E$ is satisfied, then $|\tilde G_n\triangle G|=|\tilde a_n-a|+|\tilde b_n-b|$. Hence, \eqref{NewProof1} becomes
\begin{equation} \label{NewProof2}
	\PP[|\tilde G_n\triangle G|\geq t|\mathcal D_0]\leq \left(\PP[|\tilde a_n-a|\geq t/2|\mathcal D_0]+\PP[|\tilde b_n-b|\geq t/2|\mathcal D_0]\right)\mathds 1(E)+\mathds 1\left(\bar E\right).
\end{equation}
Let us bound the first term. 
\begin{equation} \label{NewProof2Bis}
	\PP[|\tilde a_n-a|\geq t/2|\mathcal D_0] = \sum_{k=0}^{n/2} \PP[|\tilde a_n-a|\geq t/2|\mathcal D_0,\#I_1^-=k]\PP[\#I_1^-=k|\mathcal D_0].
\end{equation}
Assume that the event $E$ is satisfied. Then, $\#I_1\geq N:=\#\{i\in I_1: X_i\geq \mu/4\}$, which is a binomial random variable with parameters $n/2,1-\mu/4$. Denote by $\lambda=(1-\mu/4)/4$. By Hausdorff inequality, for all integers $k\leq n\lambda$,
\begin{align*}
	\PP[\#I_1^-=k|\mathcal D_0] & \leq \PP[N\leq k] \\
	& \leq e^{-4n\lambda^2}.
\end{align*}
By the second part of Theorem \ref{ThmCPRD}, for all $k\geq 1$,
\begin{equation} \label{NewProof3}
	\PP[|\tilde a_n-a|\geq t/2|\mathcal D_0,\#I_1^-=k] \leq \begin{cases}
		1 \mbox{ if } t<\frac{A_1\sigma^4}{k} \\
		\Psi\left(\frac{kt}{\sigma^2}-A_1\sigma^2\right) \mbox{ otherwise},
	\end{cases}
\end{equation}
where we denoted by $\DS \Psi(x)=e^{-\frac{x}{A_2\sqrt x +A_3\sigma^2}}$.
In the sequel, we assume that $\DS t\geq (1+\eta)\frac{A_1\sigma^4}{\lambda n}$, where $\eta>0$ is a constant that we will choose later. Then, for all $k>n\lambda$, it holds that $t\geq \frac{A_1\sigma^4}{k}$ and we can use the inequality of the second case in \eqref{NewProof3}. Hence, it follows from \eqref{NewProof2Bis} that

\begin{equation} \label{NewProof4}
\PP[|\tilde a_n-a|\geq t/2|\mathcal D_0] \leq \sum_{k=0}^{\lambda n} e^{-4n\lambda^2} + \sum_{k=\lambda n}^{n/2}\Psi\left(\frac{kt}{\sigma^2}-A_1\sigma^2\right)\PP[\#I_1^-=k|\mathcal D_0].
\end{equation}

Now, we choose the constant $\DS \eta=\frac{A_3\sigma^2}{A_2A_1}$, so that, for $k\geq \lambda n$ and $x=\frac{kt}{\sigma^2}-A_1\sigma^2$, $\DS \Psi(x)\leq e^{-\frac{\sqrt x}{2A_3\sigma^2}}$. In addition, still for $k\geq\lambda n$, $\DS x\geq \frac{\eta}{1+\eta}\frac{kt}{\sigma^2}$. Hence, \eqref{NewProof4} becomes

\begin{align} \label{NewProof5}
	\PP[|\tilde a_n-a|\geq t/2|\mathcal D_0] & \leq \lambda ne^{-4n\lambda^2}+\sum_{k=\lambda n}^{\infty}e^{-C_1\sqrt{kt}}\PP[\#I_1^-=k|\mathcal D_0] \nonumber \\
	& \leq \lambda ne^{-4n\lambda^2}+e^{-C_1\sqrt{\lambda nt}}
\end{align}
where $C_1$ is a positive constant that depends on $\sigma^2$ only.

The same reasoning yields, still under the assumption that the event $E$ holds,
\begin{align} \label{NewProof6}
	\PP[|\tilde b_n-b|\geq t/2|\mathcal D_0] \leq \lambda_1 ne^{-4n\lambda_1^2}+e^{-C_1\sqrt{\lambda_1 nt}},
\end{align}
for all $t\geq (1+\eta)A_1\sigma^4/(\lambda_1 n)$, where $\lambda_1=\mu/16$. Finally, \eqref{NewProof2} becomes

\begin{equation} \label{NewProof7}
	\PP[|\tilde G_n\triangle G|\geq t|\mathcal D_0]\leq B_1e^{-B_2n}+2e^{-C_3\sqrt{nt}}+\mathds 1\left(\bar E\right),
\end{equation}
for all $t\geq C_4/n$, where $C_1, C_2, C_3$ and $C_4$ are 
positive constants that depend on $\mu$ and $\sigma^2$ only. Since Theorem \ref{LSETheorem1} yields $\PP_G[E]\geq 1-C_1n^4e^{-C_2\mu n/2}$, taking the expectation in \eqref{NewProof7} with respect to $\mathcal D_0$ gives the desired result. \hfill $\blacksquare$

\bibliographystyle{alpha}
\bibliography{Biblio}

\end{document}